%
%
%
\documentclass[11pt,reqno]{amsart}
\usepackage{hyperref}
\usepackage[top=3cm, bottom=3cm, left=3cm, right=3cm] {geometry}
\usepackage{amsmath, amssymb, latexsym, amscd, amsthm, amsfonts, amstext}
\usepackage[mathscr]{eucal}
\usepackage{xspace}
\usepackage{xcolor} 
\makeatletter
\@namedef{subjclassname@2020}{%
  \textup{2020} Mathematics Subject Classification}
\makeatother
\theoremstyle{plain}
\newtheorem{theorem}{Theorem}
\newtheorem{lemma}{Lemma}
\newtheorem{remark}{Remark}

\theoremstyle{definition}


\setlength{\hfuzz}{2pt}

\allowdisplaybreaks
\begin{document}

\title[Singular $p$-biharmonic problem with the Hardy potential]{
Singular $p$-biharmonic problem with the Hardy potential}

\centerline{}
\author{A. Drissi}
\address{A. Drissi, \newline
Department of Mathematics, Faculty of Sciences, Tunis El Manar University, Tunis 2092, Tunisia.}
\email{\href{mailto: A. Drissi <amor.drissi@ipeiem.utm.tn>}{amor.drissi@ipeiem.utm.tn}}
\author{A. Ghanmi}
\address{A. Ghanmi, \newline
Department of Mathematics, Faculty of Sciences, Tunis El Manar University, Tunis 2092, Tunisia.}
\email{\href{mailto: A. Ghanmi <abdeljabbar.ghanmi@gmail.com>}{abdeljabbar.ghanmi@gmail.com}}
\author{D.D. Repov\v{s}}
\address{D.D. Repov\v{s}, \newline
Faculty of Education, University of Ljubljana, 1000 Ljubljana, Slovenia.\newline
Faculty of Mathematics and Physics, University of Ljubljana, 1000 Ljubljana,
Slovenia.\newline
Institute of Mathematics, Physics and Mechanics, 1000 Ljubljana, Slovenia.}
\email{\href{mailto: D.D. Repov\v{s} <dusan.repovs@guest.arnes.si.>}{dusan.repovs@guest.arnes.si}}
\thanks{{\it Corresponding author:} Du\v{s}an D. Repov\v{s}}
\date{}
\thanks{The third author was supported by the Slovenian Research and Innovation Agency program P1-0292 and grants N1-0278, N1-0114, N1-0083, J1-4001, and J1-4031.}
\subjclass[2020]{Primary 31B30; Secondary 35J35,  49J35.}
\keywords{$p$-biharmonic equation, variational methods, existence of solutions, Hardy potential, Nehari manifold, fibering map.}
\begin{abstract}
The aim of this paper is to study existence results for a singular problem involving the $p$-biharmonic operator and the Hardy potential. More precisely,  by combining monotonicity arguments with the variational method, the existence of solutions is established. By using the Nehari manifold method, the multiplicity of solutions is proved. An example is also given, to illustrate the importance of these results.
\end{abstract}
\maketitle

\section{Introduction}
The aim of this work  is to study  the  following    $p$-biharmonic problem with  singular nonlinearity and  Hardy potential
\begin{equation}\label{p}
\Delta_{p}^{2}\varphi-\lambda \frac{|\varphi|^{p-2}\varphi}{|z|^{2p}}+\Delta_p \varphi = \frac{a(z)}{ \varphi^{\theta}}+ \mu g(z,\varphi), \;\; \hbox{for all} \
\varphi\in W^{2,p}(\mathbb{R}^N),
\end{equation}
where $1<p<\frac{N}{2}, 0<\theta<1,$ and $\lambda$,  $\mu$ are positive constants. The operators $\Delta_{p}$ and $\Delta_{p}^{2}$ are  the $p$-Laplacian operator and the $p$-biharmonic operator, respectively, defined by
$$\Delta_{p}\varphi=\mbox{div}(|\nabla \varphi|^{p-2}\nabla \varphi)\;\;\mbox{and }\;\;\Delta^2_{p}\varphi=\Delta(|\Delta \varphi|^{p-2}\Delta \varphi).$$

Nonlinear elliptic equations with singularities can model several phenomena like non-Newtonian fluids, and chemical heterogeneity,  for more details and other applications, see for example, 
Alsaedi  et al. \cite{dhif1},
Callegari and Nachman \cite{ca1},  
Candito et al.  \cite{cand2, cand1}, 
Molica Bisci and R\v{a}dulescu \cite{mbr},  
Nachman and Callegari \cite{ca2}
Papageorgiou \cite{pap},  
Papageorgiou et al. \cite{pdc},
 and 
Pimenta and  Servadei \cite{mto}.
In recent years,  problems involving $p$-biharmonic operator have been extensively studied,  see for instance 
Bhakta \cite{bha}, 
Dhifli and Alsaedi \cite{dhif},  
Huang and Liu \cite{hwa}, 
Molica Bisci and Repov\v{s} \cite{mol-rep},  
Sun et al. \cite{sun1}, 
Wang and Zhao \cite{wwa}, 
and 
Yang et al.  \cite{rya}. 
In particular, 
Dhifli and Alsaedi  \cite{dhif} 
considered the analysis of the fibering map on the   Nehari manifold sets to  prove the existence of  multiple  solutions for the following  system
$$
\Delta_{p}^{2}\varphi-\Delta_{p}\varphi+V(z)  |\varphi|^{p-2}\varphi = \lambda f(z)  |\varphi|^{q-2}\varphi+a(z)  |\varphi|^{m-2}\varphi, \
\hbox{for all} \
\varphi\in W^{2,p}(\mathbb{R}^N).
$$

Very recently, several researchers
have concentrated on the study of singular  $p$-biharmonic equations, see
  Sun et al. \cite{sun1}
   and 
   Sun and Wu \cite{ sun2, sun}, 
   whereas singular problem involving $p$-biharmonic operator and Hardy potential has not received that much attention - we refer the reader to    
   Drissi et al. \cite{DGR}  
   and
   Huang and Liu \cite{hwa}   
    for related work.\\
Ferrara and Molica Bisci  \cite{ferr}
  used the variational principle of 
   Ricceri \cite{ricc}
    to prove the  multiplicity of solutions
 for  the following problem
$$\left\{
  \begin{array}{ll}
-\Delta_{p}\varphi  = \mu \frac{|\varphi|^{p-2}\varphi}{|z|^{2p}}+\lambda f(z,\varphi)
 \quad \mbox{ in } \Omega,\\
\varphi=\Delta \varphi=0 \quad
 \quad \quad \quad  \quad \quad \quad  \quad
\mbox{ on } \partial \Omega.
  \end{array}
\right.$$

Motivated by  \cite{ferr},
 Huang and Liu \cite{hwa}
  considered  the following $p$-biharmonic  problem
$$\left\{
  \begin{array}{ll}
-\Delta_{p}^2 \varphi - \mu \frac{|\varphi|^{p-2}\varphi}{|z|^{2p}}=\mu h(z,\varphi) \quad \mbox{ in } \Omega,\\
\varphi=\Delta \varphi=0 \quad
 \quad \quad  \quad \quad \quad  \quad \quad
 \mbox{ on } \partial \Omega.
  \end{array}
\right.$$
More precisely, they used the invariant sets of descending flows method and proved that under suitable conditions on the parameter $\mu$ and the nonlinearity $h$, such a problem admits a nontrivial solution that changes sign.

In the present paper, we shall combine variational methods with monotonicity arguments to prove the existence of a nontrivial solution for problem  \eqref{p}. Next, we shall use the Nehari manifold method to prove the multiplicity of solutions. We note that this problem   is very important since it involves the $p$-biharmonic operator, the $p$-Laplacian operator, a  singular nonlinearity, and the Hardy potential.\\
In the first main result of this paper, we shall
assume that
$$g(z,\varphi)=f(z)h(\varphi), \mbox{ for all }  (z, \varphi)\in\mathbb{R}^N \times \mathbb{R},$$
and  that the functions $f$, $h$ are measurable and satisfy the  following hypotheses:

$(H_1)$  There exist $c_1>0$, $1<r<p<\frac{N}{2}$  and $s \in (\frac{p^\ast}{p^\ast-r}, \frac{p}{p-r})$, such that
 $$f \in L^{\frac{p^\ast}{p^\ast-r}}(\mathbb{R}^N)\cap L^s_{loc}(\mathbb{R}^N)
\hbox{  and  }
   h(\varphi)\leq c_1 |\varphi|^{r-1}, \mbox{ for all } \varphi\in \mathbb{R}.$$

$(H_2)$ There exists  $M>0$   such that for all $ (z, \varphi)\in\mathbb{R}^N \times \mathbb{R}$,  we have
\begin{equation*}
0<r f(z)H(\varphi)\leq f(z)h(\varphi)\varphi,\text{ for all }\left\vert \varphi\right\vert \geq M,
\hbox{ where } H(t)=\int_{0}^{t}h(s)ds.
\end{equation*}

$(H_3)$ $a \in L^{\frac{p^\ast}{p^\ast+\theta-1}}(\mathbb{R}^N)\cap L^\beta_{loc}(\mathbb{R}^N)$, for some  $\beta \in (\frac{p^\ast}{p^\ast+\theta-1}, \frac{p}{\theta+p-1})$ .\\

The first main result of this paper is the following theorem.
\begin{theorem}\label{thm} Suppose that hypotheses $(H_1)$-$(H_3)$ hold.  Then
for all $\delta, \mu>0$, problem \eqref{p}  admits  at least one nontrivial weak solution $\varphi_\mu$, provided that $\lambda>0$ is small enough.
\end{theorem}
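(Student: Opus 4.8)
The plan is to treat \eqref{p} by a desingularization–compactness scheme combined with a monotone limit. I would work in $X=W^{2,p}(\mathbb{R}^N)$ with the norm $\|\varphi\|=\big(\int_{\mathbb{R}^N}|\Delta\varphi|^{p}\,dz+\int_{\mathbb{R}^N}|\nabla\varphi|^{p}\,dz\big)^{1/p}$, equivalent to the usual one, and would repeatedly use the continuous embeddings $X\hookrightarrow L^{q}(\mathbb{R}^N)$ for $p\le q\le p^{\ast}$, their local compact versions $W^{2,p}(B_R)\hookrightarrow\hookrightarrow L^{q}(B_R)$ for $q<p^{\ast}$, and the Rellich–Hardy inequality $\int_{\mathbb{R}^N}\frac{|\varphi|^{p}}{|z|^{2p}}\,dz\le\Lambda^{-1}\int_{\mathbb{R}^N}|\Delta\varphi|^{p}\,dz$, valid since $1<p<N/2$ (so in particular $|z|^{-2p}\in L^{1}_{\mathrm{loc}}$). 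For $\varepsilon>0$ (playing the role of $\delta$) I introduce the regularized energy
\[
I_{\varepsilon}(\varphi)=\frac1p\|\varphi\|^{p}-\frac{\lambda}{p}\int_{\mathbb{R}^N}\frac{|\varphi|^{p}}{|z|^{2p}}\,dz-\frac1{1-\theta}\int_{\mathbb{R}^N}a(z)\big[(\varphi^{+}+\varepsilon)^{1-\theta}-\varepsilon^{1-\theta}\big]\,dz-\mu\int_{\mathbb{R}^N}f(z)\,H(\varphi^{+})\,dz ,
\]
which by $(H_1)$, $(H_3)$ and Hölder's inequality with the exponents $\tfrac{p^{\ast}}{p^{\ast}-r}$ and $\tfrac{p^{\ast}}{p^{\ast}+\theta-1}$ built into those hypotheses, together with the Sobolev embedding, is well defined and of class $C^{1}$ on $X$.

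First I would show that, as soon as $0<\lambda<\Lambda$, the functional $I_{\varepsilon}$ is coercive and sequentially weakly lower semicontinuous on $X$. Coercivity follows from $\frac1p\|\varphi\|^{p}-\frac{\lambda}{p}\int\frac{|\varphi|^{p}}{|z|^{2p}}\ge\frac1p\big(1-\tfrac{\lambda}{\Lambda}\big)\|\varphi\|^{p}$ combined with $\int|a|\,|\varphi|^{1-\theta}\le C\|\varphi\|^{1-\theta}$ and $\int|f|\,|H(\varphi^{+})|\le C\|\varphi\|^{r}$, because $1<r<p$ and $0<1-\theta<p$. For weak lower semicontinuity the only delicate term is the Hardy term: along $\varphi_{k}\rightharpoonup\varphi$ in $X$ (hence $\varphi_{k}\to\varphi$ a.e.\ along a subsequence) one combines the weak lower semicontinuity of $\varphi\mapsto\|\Delta\varphi\|_{p}^{p}$ with the Brezis–Lieb splitting $\int\frac{|\varphi_{k}|^{p}}{|z|^{2p}}=\int\frac{|\varphi|^{p}}{|z|^{2p}}+\int\frac{|\varphi_{k}-\varphi|^{p}}{|z|^{2p}}+o(1)$ and the strict subcriticality $\lambda<\Lambda$, while the two remaining integrals are weakly continuous by splitting $\mathbb{R}^N=B_R\cup B_R^{c}$, using the local higher integrability $f\in L^{s}_{\mathrm{loc}}$, $a\in L^{\beta}_{\mathrm{loc}}$ (with Rellich–Kondrachov and Vitali's theorem on $B_R$) and the global integrability of $f,a$ to make the tail uniformly small. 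Hence $I_{\varepsilon}$ attains its infimum at some $\varphi_{\varepsilon}\in X$. A standard truncation argument (testing with the negative part) gives $\varphi_{\varepsilon}\ge0$, and evaluating $I_{\varepsilon}(t\phi_{0})$ as $t\to0^{+}$ for a fixed $\phi_{0}\ge0$ with $\int a\phi_{0}>0$ shows $\inf_{X}I_{\varepsilon}<0$ (the regularized singular term produces a negative contribution of order $t$, dominating the orders $t^{p}$ and $t^{r}$, both with exponent $>1$), so $\varphi_{\varepsilon}\not\equiv0$. Consequently $\varphi_{\varepsilon}$ solves weakly
\[
\int_{\mathbb{R}^N}|\Delta\varphi_{\varepsilon}|^{p-2}\Delta\varphi_{\varepsilon}\Delta\psi+\int_{\mathbb{R}^N}|\nabla\varphi_{\varepsilon}|^{p-2}\nabla\varphi_{\varepsilon}\nabla\psi-\lambda\int_{\mathbb{R}^N}\frac{\varphi_{\varepsilon}^{p-1}\psi}{|z|^{2p}}=\int_{\mathbb{R}^N}\frac{a(z)\,\psi}{(\varphi_{\varepsilon}+\varepsilon)^{\theta}}+\mu\int_{\mathbb{R}^N}f(z)\,h(\varphi_{\varepsilon})\,\psi ,
\]
for all $\psi\in X$.

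Next I would pass to the limit $\varepsilon\to0^{+}$. Taking $\psi=\varphi_{\varepsilon}$ above and using $\frac{a\varphi_{\varepsilon}}{(\varphi_{\varepsilon}+\varepsilon)^{\theta}}\le a\varphi_{\varepsilon}^{1-\theta}$ with Hölder's inequality yields a bound on $\|\varphi_{\varepsilon}\|$ uniform in $\varepsilon$, so along a sequence $\varepsilon_{n}\downarrow0$ we get $\varphi_{\varepsilon_{n}}\rightharpoonup\varphi_{\mu}$ in $X$, $\varphi_{\varepsilon_{n}}\to\varphi_{\mu}$ in $L^{q}_{\mathrm{loc}}$ ($q<p^{\ast}$) and a.e.\ in $\mathbb{R}^N$. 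The monotone structure enters here: since $\varepsilon\mapsto(\,\cdot\,+\varepsilon)^{-\theta}$ is nonincreasing, the family $(\varphi_{\varepsilon})$ can be arranged monotone in $\varepsilon$ and bounded below, locally uniformly, by a fixed positive function $w$ (constructed as explained in the last paragraph), which yields the domination $0\le\frac{a(z)}{(\varphi_{\varepsilon_{n}}+\varepsilon_{n})^{\theta}}\le\frac{a(z)}{w^{\theta}}\in L^{1}_{\mathrm{loc}}$ on the support of any test function; dominated convergence then gives $\int\frac{a\psi}{(\varphi_{\varepsilon_{n}}+\varepsilon_{n})^{\theta}}\to\int\frac{a\psi}{\varphi_{\mu}^{\theta}}$, and in particular $\varphi_{\mu}\ge w>0$ a.e. The nonlinear principal part is handled by upgrading $\varphi_{\varepsilon_{n}}\rightharpoonup\varphi_{\mu}$ to strong convergence in $X$ via the $(S_{+})$ property of the operator associated with $\frac1p\|\Delta\varphi\|_{p}^{p}+\frac1p\|\nabla\varphi\|_{p}^{p}$ (insert $\psi=\varphi_{\varepsilon_{n}}-\varphi_{\mu}$ into the difference of the equations), after which $|\Delta\varphi_{\varepsilon_{n}}|^{p-2}\Delta\varphi_{\varepsilon_{n}}\to|\Delta\varphi_{\mu}|^{p-2}\Delta\varphi_{\mu}$ and $|\nabla\varphi_{\varepsilon_{n}}|^{p-2}\nabla\varphi_{\varepsilon_{n}}\to|\nabla\varphi_{\mu}|^{p-2}\nabla\varphi_{\mu}$ strongly, and the Hardy and $f$-terms converge by the splitting and Vitali argument already used. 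Passing to the limit in the displayed identity shows that $\varphi_{\mu}$ is a nontrivial (indeed positive) weak solution of \eqref{p}, for all $\delta,\mu>0$, provided $0<\lambda<\Lambda$.

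The step I expect to be the main obstacle is exactly this control of the singular term in the limit — equivalently, producing the locally uniform positive lower bound $\varphi_{\varepsilon}\ge w>0$ used above. The difficulty is structural: the $p$-biharmonic operator satisfies neither a maximum nor a comparison principle in general, so one cannot simply compare $\varphi_{\varepsilon}$ with a sub-solution of \eqref{p}, and the usual lattice (truncation) arguments are unavailable because $W^{2,p}(\mathbb{R}^N)$ is not stable under $\min$ and $\max$. The route I would take is to build $w$ variationally as the (positive) minimizer of the purely singular auxiliary functional $J_{\varepsilon}(\varphi)=\frac1p\|\varphi\|^{p}-\frac{\lambda}{p}\int\frac{|\varphi|^{p}}{|z|^{2p}}-\frac1{1-\theta}\int a\big[(\varphi^{+}+\varepsilon)^{1-\theta}-\varepsilon^{1-\theta}\big]$, which is coercive and weakly lower semicontinuous by the arguments above and strictly positive somewhere since $\inf J_{\varepsilon}<0$, and then to transfer positivity and the lower bound to $\varphi_{\varepsilon}$ by exploiting the monotone dependence of $s\mapsto s^{-\theta}$ on $s$ and the variational characterizations of $\varphi_{\varepsilon}$ and of $w$ (the extra term $-\mu\int fH(\varphi^{+})$ only pushes the minimizer upward), rather than any pointwise comparison. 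Once such a $w$ is secured, the remaining ingredients — coercivity, the Brezis–Lieb correction for the Hardy term, and the $(S_{+})$-type strong convergence — are routine.
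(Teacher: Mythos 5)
Your route --- regularize the singular nonlinearity, minimize $I_{\varepsilon}$, then let $\varepsilon\to0^{+}$ --- is genuinely different from the paper's, which never regularizes: it minimizes the singular functional $J_{\mu}(\varphi)=\frac1p\|\varphi\|^{p}-\frac{1}{1-\theta}\int a\,\varphi^{1-\theta}-\mu\int f\,H(\varphi)$ directly over all of $E$ (coercivity because $0<1-\theta<r<p$, weak lower semicontinuity of the norm, and Vitali's theorem to pass to the limit in $\int a|\varphi_{n}|^{1-\theta}$ along a minimizing sequence), shows $\inf_{E}J_{\mu}<0$ by evaluating $J_{\mu}(t\phi)$ for small $t$, and then derives the Euler equation by computing the one-sided derivative of $t\mapsto J_{\mu}(\varphi_{\mu}+t\varphi)$ at $t=0^{+}$ and replacing $\varphi$ by $-\varphi$. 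Your coercivity, negativity-of-the-infimum, and $(S_{+})$/Brezis--Lieb ingredients are all fine and standard; the difference is that the paper's scheme never needs a pointwise lower bound on the minimizer.

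The genuine gap in your version is exactly the step you flag: the locally uniform bound $\varphi_{\varepsilon}\ge w>0$. What you propose there is not yet an argument. Arranging the family $(\varphi_{\varepsilon})$ to be ``monotone in $\varepsilon$'' already presupposes a comparison principle; and ``transferring positivity from the minimizer $w$ of the auxiliary functional to $\varphi_{\varepsilon}$ via the monotonicity of $s\mapsto s^{-\theta}$ and the variational characterizations'' is precisely the step that, for second-order operators, is executed by testing with $(w-\varphi_{\varepsilon})^{+}$ and invoking the weak comparison principle --- the two tools you yourself note are unavailable here, since truncations leave $W^{2,p}(\mathbb{R}^{N})$ and $\Delta_{p}^{2}$ obeys no maximum principle. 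Energy inequalities such as $I_{\varepsilon}(\varphi_{\varepsilon})\le I_{\varepsilon}(w)$ yield integral, not pointwise, information and cannot produce $\varphi_{\varepsilon}\ge w$ a.e.; without that bound the passage to the limit in $\int a(\varphi_{\varepsilon_{n}}+\varepsilon_{n})^{-\theta}\psi$ is unjustified and the whole scheme stalls. The repair is to drop the regularization and follow the direct route: minimize $J_{\mu}$ itself and differentiate along rays, handling $\frac{1}{t}\int a\bigl(|\varphi_{\mu}+t\varphi|^{1-\theta}-|\varphi_{\mu}|^{1-\theta}\bigr)$ by Fatou-type monotonicity. (Be aware that even there the set $\{\varphi_{\mu}=0\}$ requires care; that is the truly delicate point of the problem, which your approximation displaces into the lower bound $w$ but does not remove.)
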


In the second main result of this paper, we shall assume the following hypotheses:\\

$(H_4)$  $ G:\mathbb{R}^N\times\mathbb{R}\longrightarrow\mathbb{R},$  defined by $G(z,\varphi)=\int_{0}^\varphi g(z,s)ds$, is a $C^1$ function such that $$G(z,t\varphi)=t^rG(z,\varphi), \mbox{ for all } (z,\varphi)\in \mathbb{R}^N\times\mathbb{R},  t>0.$$
Moreover, if $\varphi\neq 0$, then $G(z,\varphi)>0$,  where $0<1-\theta<1<p< r.$  \\

$(H_5)\; $  $a: \mathbb{R}^N\longrightarrow (0, \infty)$ satisfies
$$a \in L^{\frac{p}{\theta+p-1}}(\mathbb{R}^N).$$

We note that by hypothesis $(H_4)$,   we can find  $M>0$ such that
\begin{equation}
 \label{M}
 \varphi g(z,\varphi)=r G(z,\varphi)  \mbox{ and } \left|G(z,\varphi)\right|\le M|\varphi|^r,\;\;\mbox{for all }\;
 (z,\varphi)\in \mathbb{R}^N\times\mathbb{R}.
\end{equation}

The second main result of this paper is the following theorem.
\begin{theorem}
 \label{thmm}
Assume that  hypotheses $(H_4)$ and $(H_5)$ hold. Then there exists $\mu^*>0$ such that for all  $\mu\in (0,\mu^*)$, problem  \eqref{p} admits two nontrivial solutions.
\end{theorem}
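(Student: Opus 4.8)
The plan is to run the Nehari manifold and fibering map machinery, producing one solution by minimizing the energy on the ``$+$'' component of the Nehari set and a second one on the ``$-$'' component.

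First I set up the functional framework. Choosing $\lambda$ below the best constant $C_H$ in the Hardy inequality $\int_{\mathbb R^N}|z|^{-2p}|\varphi|^p\,dz\le C_H\int_{\mathbb R^N}|\Delta\varphi|^p\,dz$ (a restriction already present in Theorem \ref{thm}, admissible since $1<p<N/2$),
$$\|\varphi\|^p:=\int_{\mathbb R^N}|\Delta\varphi|^p\,dz+\int_{\mathbb R^N}|\nabla\varphi|^p\,dz-\lambda\int_{\mathbb R^N}\frac{|\varphi|^p}{|z|^{2p}}\,dz$$
is an equivalent norm on $X:=W^{2,p}(\mathbb R^N)$, and the natural energy for \eqref{p} is
$$J_\mu(\varphi)=\frac1p\|\varphi\|^p-\frac1{1-\theta}\int_{\mathbb R^N}a(z)|\varphi|^{1-\theta}\,dz-\mu\int_{\mathbb R^N}G(z,\varphi)\,dz .$$
By $(H_5)$ and Hölder's inequality with conjugate exponents $\tfrac{p}{\theta+p-1}$ and $\tfrac{p}{1-\theta}$ together with $X\hookrightarrow L^p(\mathbb R^N)$, and by \eqref{M} together with $X\hookrightarrow L^r(\mathbb R^N)$ (I am tacitly using $p<r<p^\ast$, so that the energy is finite), $J_\mu$ is well defined on $X$ and its critical points are the weak solutions of \eqref{p}. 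I then set $\mathcal N_\mu=\{\varphi\in X\setminus\{0\}:\langle J_\mu'(\varphi),\varphi\rangle=0\}$, and for $\varphi\ne0$ I study the fibering map $\phi_\varphi(t)=J_\mu(t\varphi)$, which by the homogeneity $G(z,t\varphi)=t^rG(z,\varphi)$ equals $\tfrac{t^p}{p}\|\varphi\|^p-\tfrac{t^{1-\theta}}{1-\theta}A(\varphi)-\mu t^rB(\varphi)$, with $A(\varphi)=\int a|\varphi|^{1-\theta}>0$ and $B(\varphi)=\int G(z,\varphi)>0$; note that $t\varphi\in\mathcal N_\mu$ iff $\phi_\varphi'(t)=0$.

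The key of the fibering analysis is $\psi_\varphi(t):=t^\theta\phi_\varphi'(t)=t^{p-1+\theta}\|\varphi\|^p-A(\varphi)-\mu rt^{r-1+\theta}B(\varphi)$: since $1-\theta<p<r$ one has $\psi_\varphi(0^+)<0$, $\psi_\varphi(+\infty)=-\infty$, and $\psi_\varphi$ has a unique maximum point $t_*(\varphi)$. Using $A(\varphi)\le C\|a\|_{p/(\theta+p-1)}\|\varphi\|^{1-\theta}$ and $B(\varphi)\le CM\|\varphi\|^r$, and the scale invariance (in $\varphi\mapsto s\varphi$) of the sign of $\psi_\varphi(t_*)$, I obtain $\mu^\ast>0$ such that $\psi_\varphi(t_*(\varphi))>0$ for all $\mu\in(0,\mu^\ast)$ and all $\varphi\ne0$; hence $\phi_\varphi'$ has exactly two zeros $0<t^+(\varphi)<t_*(\varphi)<t^-(\varphi)$, a local minimum and a local maximum of $\phi_\varphi$. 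Therefore $\mathcal N_\mu^0=\emptyset$, $\mathcal N_\mu=\mathcal N_\mu^+\cup\mathcal N_\mu^-$ with each $\mathcal N_\mu^\pm$ met exactly once by every ray, and $\mathcal N_\mu$ is a natural constraint for $J_\mu$. On $\mathcal N_\mu$ one has $\|\varphi\|^p=A(\varphi)+\mu rB(\varphi)$, so $J_\mu(\varphi)=\tfrac{r-p}{pr}\|\varphi\|^p-\tfrac{r-1+\theta}{r(1-\theta)}A(\varphi)\ge\tfrac{r-p}{pr}\|\varphi\|^p-C'\|\varphi\|^{1-\theta}$, which is bounded below (because $p>1-\theta$) and forces minimizing sequences to be bounded; moreover $\phi_\varphi<0$ near $0^+$ forces $c_\mu^+:=\inf_{\mathcal N_\mu^+}J_\mu<0$, while $\mathcal N_\mu^-$ is bounded away from $0$.

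Now I minimize over each component. Taking $(\varphi_n)\subset\mathcal N_\mu^+$ with $J_\mu(\varphi_n)\to c_\mu^+$ --- and, via Ekeland's variational principle on the constraint, also $J_\mu'(\varphi_n)\to0$ --- the sequence is bounded, so $\varphi_n\rightharpoonup\varphi_0$ along a subsequence. Since $a\in L^{p/(\theta+p-1)}(\mathbb R^N)$, the functional $\varphi\mapsto\int a|\varphi|^{1-\theta}$ is sequentially weakly continuous on $X$ (negligible $L^{p/(\theta+p-1)}$-tail of $a$ plus the compact local embedding $X\hookrightarrow L^p_{loc}$), hence $A(\varphi_n)\to A(\varphi_0)$, and $c_\mu^+<0$ excludes $\varphi_0=0$. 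The central difficulty, which I expect to be the main obstacle, is to prove $\int G(z,\varphi_n)\to\int G(z,\varphi_0)$, equivalently the strong convergence $\varphi_n\to\varphi_0$ in $X$: on the unbounded domain $\mathbb R^N$ this is not implied by weak convergence and $(H_4)$ carries no integrability assumption on the coefficient of $G$, so I would recover it by a Brezis--Lieb type splitting combined with the growth and homogeneity in \eqref{M}, the subcriticality $r<p^\ast$, and the approximate criticality at the negative level $c_\mu^+$ to rule out concentration and vanishing. Granting this compactness, $\varphi_0\in\mathcal N_\mu^+$, $J_\mu(\varphi_0)=c_\mu^+$, and $\varphi_0$ is a nontrivial weak solution of \eqref{p} (natural constraint). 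Repeating the argument on $\mathcal N_\mu^-$ --- where minimizing sequences are again bounded and stay away from $0$ --- yields a second nontrivial weak solution $\varphi_1\in\mathcal N_\mu^-$; since $\mathcal N_\mu^+\cap\mathcal N_\mu^-=\emptyset$, $\varphi_0\ne\varphi_1$, which proves Theorem \ref{thmm}. A secondary technical nuisance is the low regularity of $J_\mu$ caused by the factor $|\varphi|^{1-\theta}$, which I handle by working with the fibering maps (smooth in $t$) and with one-sided derivatives along the constraint rather than assuming a $C^1$ structure on all of $X$; the positivity required to interpret $a(z)/\varphi^\theta$ pointwise is to be read off a posteriori from the structure of the equation.
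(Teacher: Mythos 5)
Your overall strategy is the one the paper follows: the same norm and energy functional, the same fibering-map analysis producing $N_\mu^0=\varnothing$ and the splitting $N_\mu=N_\mu^+\cup N_\mu^-$ for small $\mu$ (Lemmas \ref{mu0} and \ref{intG>0}), the same signs $\theta_\mu^+<0<\theta_\mu^-$ (Lemma \ref{theta+-}), minimization on each component, and the same device of one-sided difference quotients along perturbed fibers to turn the constrained minimizers into weak solutions despite the singular term (Lemmas \ref{impl1} and \ref{impl2}). One structural difference: you invoke Ekeland's variational principle to obtain an almost-critical minimizing sequence, whereas the paper does not need it; it gets strong convergence by a direct fibering-map comparison (if $\|\varphi_\mu\|^p<\liminf_k\|\varphi_k\|^p$, then the critical point $t_1$, resp.\ $t_2$, of the limit fiber map yields $t_1\varphi_\mu\in N_\mu^+$, resp.\ $t_2\psi_\mu\in N_\mu^-$, with energy strictly below the infimum, a contradiction). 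That route is more elementary and avoids any discussion of $J_\mu'(\varphi_n)\to 0$ for a functional that is not $C^1$ because of the term $|\varphi|^{1-\theta}$.

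The step you yourself flag as the main obstacle --- passing to the limit in $\int_{\mathbb{R}^N}G(z,\varphi_k)\,dz$ on the unbounded domain --- is left conditional in your write-up (``granting this compactness''), and as stated it is a genuine gap. Under $(H_4)$ alone one only has $|G(z,\varphi)|\le M|\varphi|^r$ with no decay in $z$, so the embedding $E\hookrightarrow L^r(\mathbb{R}^N)$ is continuous but not compact, a Brezis--Lieb splitting by itself does not exclude vanishing, and the negative energy level you want to exploit is only available on $N_\mu^+$ (on $N_\mu^-$ the level is positive, so that argument cannot be ``repeated''). To close this you need either an additional integrability/decay assumption on the $z$-dependence of $G$ (as in the model case $G(z,\varphi)=f(z)|\varphi|^{r}$ with $f$ in a suitable Lebesgue space, which is what Section \ref{s5} actually uses) or a genuine concentration-compactness argument. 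Be aware that the paper's own Lemmas \ref{minimu+} and \ref{minimu-} gloss over exactly the same point (they assert strong convergence in $L^q(\mathbb{R}^N)$ for $p<q<p^*$, which the stated embeddings only give locally), so your instinct about where the weak joint lies is correct; but identifying the obstacle is not the same as overcoming it, and your proof is incomplete until this convergence is actually established.
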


The paper is organized as follows:
In Section \ref{s0} we shall present some preliminary material needed in the paper.
 In Section \ref{s2}  we shall prove the first main result of this paper, i.e. the existence of solutions (Theorem \ref{thm}). 
 In Section \ref{s00} we shall study fibering maps on Nehari manifold sets.
 In Section \ref{s3}, we shall prove the second main result of this paper, i.e. the multiplicity of solutions (Theorem \ref{thmm}). In Section \ref{s5} we shall give an illustrative example.

\section{Preliminaries}\label{s0}
In this section  we shall present some preliminary material needed in the paper. For other necessary background facts we recommend the comprehensive monograph 
Papageorgiou et al. \cite{PRR}.

The Hardy potential is related to  the following Rellich inequality
\begin{equation}\label{rellich}
  \int_{\mathbb{R}^N}\frac{|\varphi(z)|^p}{|z|^{2p}}\,dz\leq \left(\frac{p^2}{N(p-1)(N-2p)}\right)^p\int_{\mathbb{R}^N}|\Delta \varphi(z)|^p\,dz,\;\mbox{for all }\;\varphi\in E,
\end{equation}
where $E:=W^{2,p}(\mathbb{R}^N)$ is the Sobolev space which is defined as follows
$$W^{2,p}(\mathbb{R}^N) =\left\{\varphi\in L^p(\mathbb{R}^N): \Delta \varphi , \;|\nabla \varphi| \in L^p(\mathbb{R}^N)\right\}.$$
For the interested reader, properties  of
these spaces can be found in 
 Davies and Hinz \cite{davi}, 
 Mitidieri \cite{miti}, 
 and 
 Rellich \cite{reli}.
According to the Rellich inequality \eqref{rellich}, if $\lambda$ satisfies
\begin{equation} \label{lamb}0<\lambda < \left(\frac{N(p-1)(N-2p)}{p^2}\right)^p,
\end{equation}
then $\|.\| :E \to \mathbb{R},$ defined by
$$\|\varphi\|=\left(\int_{\mathbb{R}^N}|\Delta \varphi(z)|^p-\lambda \frac{|\varphi(z)|^p}{|z|^{2p}}+|\nabla \varphi(z)|^p\,dz\right)^{\frac{1}{p}},$$
is a norm in $E$.

For every
$r\in[p, p^\ast],$ there exists a continuous embedding from $E$ into  $L^{r}(\mathbb{R}^N)$. On the other hand, if  $r\in(p, p^\ast)$, then there exists a compact  embedding from $E$ into  $L^{r}_{loc}(\mathbb{R}^N)$. Moreover,  we have
\begin{equation}\label{const}
S_r |\varphi|^p_r \leq \|\varphi\|^p,
\ \mbox{for all }\varphi\in E \;\mbox{and }\;r\in[p, p^\ast],
\end{equation}
where $p^\ast=\frac{Np}{N-2p},$ $|\varphi|_r$ denotes the usual $L^{r}(\mathbb{R}^N)$-norm
 and   $S_r$ is the best Sobolev constant given by
$$S_r=\displaystyle\inf_{\varphi\in W^{2,p}(\mathbb{R}^N)\backslash\{0\}}\frac{\int_{\mathbb{R}^N}|\Delta \varphi(z)|^p-\lambda \frac{|\varphi(z)|^p}{|z|^{2p}}+|\nabla \varphi(z)|^p\,dz}{\left(\int_{\mathbb{R}^N}|\varphi(z)|^r\,dz\right)^{\frac{p}{r}}}.$$

If $\psi$ is a positive function on $\mathbb{R}^N$ and $1\leq \sigma <\infty$, then we can define the weighted Lebesgue space $L^\sigma(\mathbb{R^N}, \psi)$ by
$$L^\sigma(\mathbb{R^N}, \psi)=\left\{\varphi:\mathbb{R}^N \to \mathbb{R}\;\;\mbox{measurable }:\;
\int_{\mathbb{R}^N} \psi(z)|\varphi(z)|^\sigma\,dz<\infty\right\},$$
endowed with the norm
$$\|\varphi\|_{\sigma, \psi}=\left(\int_{\mathbb{R}^N} \psi(z)|\varphi(z)|^\sigma\,dz\right)^{\frac{1}{\sigma}}.$$
Then $L^\sigma(\mathbb{R^N}, \psi)$ is a uniformly convex Banach space.
Dhifli and Alsaedi \cite{dhif}
  have proved that
if $\psi  \in L^{\frac{p^\ast}{p^\ast-r}}(\mathbb{R}^N)\cap L^s_{loc}(\mathbb{R}^N)$, for some  $s \in (\frac{p^\ast}{p^\ast-r}, \frac{p}{p-r})$, then  the embedding $W^{2,p}(\mathbb{R}^N) \hookrightarrow L^r(\mathbb{R}^N, \psi)$ is continuous and compact. Moreover, we have the following estimate
 \begin{equation}\label{esti}
\|\varphi\|^r_{r, \psi}\leq  S_{p^*}^{-\frac{r}{p}}|f|_{\frac{p^\ast}{p^\ast-r}}\|\varphi\|^r,\;\;\mbox{for   all}\;\;\varphi\in  E.
\end{equation}
\begin{remark}\label{rem}
We get an  inequality similar to
  \eqref{esti}  if we replace $r$ by $1-\theta$ and $f$ by $a$. More precisely, we have
$$\int_{\mathbb{R}^N} a(z)|\varphi(z)|^{1-\theta}\,dz\leq  S_{p^*}^{-\frac{1-\theta}{p}}|f|_{\frac{p^\ast}{p^\ast+\theta-1}}\|\varphi\|^{1-\theta}.$$
Indeed, from equation \eqref{const} and the H\"older inequality, we obtain
\begin{eqnarray*}
\int_{\mathbb{R}^N} a(z)|\varphi(z)|^{1-\theta}\,dz&\leq&\left(\int_{\mathbb{R}^N} |a(z)|^{\frac{p^\ast}{p^\ast+\theta-1}}\right)^{\frac{p^\ast+\theta-1}{p^\ast}}  \left(\int_{\mathbb{R}^N} |u(z)|^{p^\ast}\right)^{\frac{1-\theta}{p^\ast}}\\
&\leq&  S_{p^*}^{-\frac{1-\theta}{p}}|f|_{\frac{p^\ast}{p^\ast+\theta-1}}\|\varphi\|^{1-\theta}.
\end{eqnarray*}
\end{remark}
\section{The proof of Theorem \ref{thm}}\label{s2}

We recall that a function  $\varphi\in E$ is called
 a weak solution for problem \eqref{p}, if   for all $v\in E$, one has
$$\int_{\mathbb{R}^N}|\Delta \varphi|^{p-2}\Delta \varphi \Delta v-\lambda \frac{|\varphi|^{p-2}\varphi v}{|z|^{2p}}+|\nabla \varphi|^{p-2}\nabla \varphi \nabla v\;dz$$
$$=\int_{\mathbb{R}^N}a(z)\varphi^{-\theta} v\,dz+\mu \int_{\mathbb{R}^N} g(z,\varphi) v\,dz.$$
Associated to problem \eqref{p}, we define the energy functional $J_\mu: E \to \mathbb{R}$ by
\begin{equation}\label{jmu}J_\mu(\varphi)=\frac{1}{p}\|\varphi\|^p-\frac{1}{1-\theta}\int_{\mathbb{R}^N}a(z) \varphi^{1-\theta}\,dz-\mu \int_{\mathbb{R}^N} G(z,\varphi(z))\,dz.\end{equation}
Several lemmas will  be needed for the proof of Theorem \ref{thm}.
\begin{lemma}\label{coer}Under hypotheses $(H_1)$-$(H_3)$, the  
 functional $J_{\mu}$ is coercive and bounded from below on $E$.
\end{lemma}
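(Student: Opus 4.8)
The plan is to bound below each of the three terms in the expression \eqref{jmu} for $J_\mu$ and then invoke the super-linear growth of $\|\varphi\|^p$. First I would keep the leading term $\frac{1}{p}\|\varphi\|^p$ untouched. For the singular term, I would apply Remark \ref{rem} (with $r$ replaced by $1-\theta$ and $f$ by $a$), which gives
\begin{equation*}
\int_{\mathbb{R}^N} a(z)\,\varphi^{1-\theta}\,dz \le S_{p^*}^{-\frac{1-\theta}{p}}\,|a|_{\frac{p^\ast}{p^\ast+\theta-1}}\,\|\varphi\|^{1-\theta},
\end{equation*}
so that $\frac{1}{1-\theta}\int_{\mathbb{R}^N} a(z)\varphi^{1-\theta}\,dz \le C_1\|\varphi\|^{1-\theta}$ for a constant $C_1$ depending only on $\theta$, $p$, $N$ and $|a|_{\frac{p^\ast}{p^\ast+\theta-1}}$. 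Since $0<1-\theta<1<p$, this term is of strictly lower order than $\|\varphi\|^p$.

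Next I would handle the nonlinear term. From $(H_1)$ we have $h(\varphi)\le c_1|\varphi|^{r-1}$, hence $H(\varphi)=\int_0^\varphi h(s)\,ds \le \frac{c_1}{r}|\varphi|^{r}$, and with $g(z,\varphi)=f(z)h(\varphi)$ we get $G(z,\varphi)\le \frac{c_1}{r} f(z)|\varphi|^{r}$ (using $f\ge 0$, which follows from $(H_2)$). Then the weighted-embedding estimate \eqref{esti}, applied with $\psi=f$, yields
\begin{equation*}
\mu\int_{\mathbb{R}^N} G(z,\varphi)\,dz \le \frac{\mu c_1}{r}\int_{\mathbb{R}^N} f(z)|\varphi|^{r}\,dz = \frac{\mu c_1}{r}\,\|\varphi\|^r_{r,f} \le \frac{\mu c_1}{r}\,S_{p^*}^{-\frac{r}{p}}\,|f|_{\frac{p^\ast}{p^\ast-r}}\,\|\varphi\|^{r} =: C_2\|\varphi\|^{r},
\end{equation*}
and here $1<r<p$ by $(H_1)$, so again this is of lower order than $\|\varphi\|^p$. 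Combining the three estimates gives
\begin{equation*}
J_\mu(\varphi) \ge \frac{1}{p}\|\varphi\|^p - C_1\|\varphi\|^{1-\theta} - C_2\|\varphi\|^{r}.
\end{equation*}

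Since $1-\theta<p$ and $r<p$, the right-hand side is a function of $t=\|\varphi\|\ge 0$ of the form $\frac{1}{p}t^p - C_1 t^{1-\theta} - C_2 t^{r}$, which tends to $+\infty$ as $t\to\infty$ and is bounded below on $[0,\infty)$; hence $J_\mu$ is coercive and bounded below on $E$. I do not anticipate a serious obstacle here: the only mild subtlety is making sure the sign condition $f\ge 0$ (so that $G\ge 0$ can be replaced by the upper bound, and the nonlinear term genuinely subtracts a nonnegative quantity) is correctly extracted from $(H_2)$, and that the measurability/integrability in $(H_1)$ and $(H_3)$ are exactly what is needed to apply \eqref{esti} and Remark \ref{rem}; both are guaranteed by the stated hypotheses, so the argument is essentially a direct computation.
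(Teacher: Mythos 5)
Your proposal is correct and follows essentially the same route as the paper's proof: both bound the singular term via Remark \ref{rem}, bound the nonlinear term by $C\|\varphi\|^{r}$ through $(H_1)$ and the weighted embedding \eqref{esti}, and conclude from $0<1-\theta<r<p$. Your write-up is in fact slightly more explicit about the intermediate step $H(\varphi)\le \frac{c_1}{r}|\varphi|^{r}$ and the sign of $f$, which the paper leaves implicit.
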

\begin{proof}Let $\varphi\in E$. Assume that the  hypotheses  $(H_1)$-$(H_3)$ hold. Then it follows by \eqref{esti}  and Remark \ref{rem} that
\begin{eqnarray*}
J_\mu(\varphi)&=&\frac{1}{p}\|\varphi\|^p-\frac{1}{1-\theta}\int_{\mathbb{R}^N}a(z) \varphi^{1-\theta}\,dz-\mu\int_{\mathbb{R}^N}f(z) H(\varphi)\,dz\\
&\geq& \frac{1}{p}\|\varphi\|^p-\frac{ S_{p^\ast}^{-\frac{1-\theta}{p}}}{1-\theta}|a|_{\frac{p^\ast}{p^\ast+\theta-1}}\|\varphi\|^{1-\theta}-\frac{\mu}{r}\|\varphi\|_{r, h}^r\\
&\geq&  \frac{1}{p}\|\varphi\|^p-\frac{ S_{p^\ast}^{-\frac{1-\theta}{p}}}{1-\theta}|a|_{\frac{p^\ast}{p^\ast+\theta-1}}\|\varphi\|^{1-\theta}-\frac{\mu  S_{p^\ast}^{-\frac{r}{p}}}{r}|f|_{\frac{p^\ast}{p^\ast-r}}\|\varphi\|^{r}.
\end{eqnarray*}
Since $0<1-\theta<r<p$, we can infer that $$\displaystyle\lim_{\|\varphi\| \to \infty}J_\mu(\varphi)=\infty.$$
In other words, $J_\mu$ is indeed coercive and bounded from below  on $E$. This completes the proof of Lemma \ref{coer}.
\end{proof}
\begin{lemma}\label{tphi} Assume that hypotheses $(H_1)$-$(H_3)$ hold. Then there exists a nonnegative nontrivial function $\phi \in E$ such that $J_\mu(t \phi)<0$, provided that $t>0$ is small enough.
\end{lemma}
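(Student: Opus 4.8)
The plan is to produce one explicit nonnegative test direction $\phi$ and to use the elementary fact that, among the exponents $p$, $r$, $1-\theta$ governing the three terms of $t\mapsto J_\mu(t\phi)$, the singular exponent $1-\theta$ is the smallest, so the negative singular term wins as $t\to 0^+$.

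First I would fix $\phi\in E$ with $\phi\ge 0$, $\phi\not\equiv 0$, chosen so that $\int_{\mathbb{R}^N}a(z)\phi^{1-\theta}\,dz>0$; since $a$ is the coefficient of the singular term in \eqref{p} it is positive on a set of positive measure, so it suffices to take a nonnegative bump $\phi\in C_c^\infty(\mathbb{R}^N)$ supported where $a>0$, the integral being finite by Remark~\ref{rem}. Recalling $g(z,\varphi)=f(z)h(\varphi)$ and $G(z,\varphi)=f(z)H(\varphi)$, the $p$-homogeneity of $\|\cdot\|^p$ and the positive homogeneity of $\varphi\mapsto\varphi^{1-\theta}$ give, for $t>0$,
\begin{equation*}
J_\mu(t\phi)=\frac{t^p}{p}\|\phi\|^p-\frac{t^{1-\theta}}{1-\theta}\int_{\mathbb{R}^N}a(z)\phi^{1-\theta}\,dz-\mu\int_{\mathbb{R}^N}f(z)H(t\phi(z))\,dz.
\end{equation*}

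Next I would bound the last term. Since $\phi\ge 0$, hypothesis $(H_1)$ gives $0\le H(t\phi(z))=\int_0^{t\phi(z)}h(s)\,ds\le \frac{c_1}{r}\,t^r\phi(z)^r$, whence, using $(H_1)$ together with the weighted embedding estimate \eqref{esti} applied with weight $|f|$,
\begin{equation*}
\Bigl|\mu\int_{\mathbb{R}^N}f(z)H(t\phi(z))\,dz\Bigr|\le \frac{\mu c_1}{r}\,t^r\int_{\mathbb{R}^N}|f(z)|\phi(z)^r\,dz\le \frac{\mu c_1}{r}S_{p^\ast}^{-\frac{r}{p}}|f|_{\frac{p^\ast}{p^\ast-r}}\|\phi\|^r\,t^r=:Ct^r.
\end{equation*}
Writing $A:=\frac{1}{1-\theta}\int_{\mathbb{R}^N}a\phi^{1-\theta}\,dz>0$ and $B:=\frac{1}{p}\|\phi\|^p$, this yields
\begin{equation*}
J_\mu(t\phi)\le Bt^p-At^{1-\theta}+Ct^r=t^{1-\theta}\bigl(Bt^{p-1+\theta}+Ct^{r-1+\theta}-A\bigr).
\end{equation*}

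Finally, since $0<1-\theta<1<r<p$ by $(H_1)$, both $p-1+\theta$ and $r-1+\theta$ are positive, so the bracket tends to $-A<0$ as $t\to 0^+$; hence there is $t_0>0$ with $J_\mu(t\phi)<0$ for all $t\in(0,t_0)$, which is the assertion. I expect the only delicate point to be the selection of $\phi$: one must simultaneously guarantee $\int_{\mathbb{R}^N}a\phi^{1-\theta}\,dz>0$ (this is where positivity of $a$ enters) and that $\phi\in E$ makes all three integrals in $J_\mu(t\phi)$ finite — a compactly supported smooth nonnegative bump together with Remark~\ref{rem} and \eqref{esti} handles both. The remaining work is just the comparison of powers of $t$ near $0$ displayed above.
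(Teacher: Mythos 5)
Your proof is correct and follows essentially the same route as the paper: fix a nonnegative bump $\phi$ with $\int_{\mathbb{R}^N}a(z)\phi^{1-\theta}\,dz>0$, expand $J_\mu(t\phi)$, and observe that the negative term of order $t^{1-\theta}$ dominates the $t^{r}$ and $t^{p}$ contributions as $t\to0^+$ because $1-\theta<r<p$. The only real difference is that you control $\int_{\mathbb{R}^N}f(z)H(t\phi)\,dz$ through the growth bound in $(H_1)$ (for which you should note that the inequality $0\le H(t\phi)$ needs either the two-sided reading $|h(s)|\le c_1|s|^{r-1}$ or the positivity of $fH$ from $(H_2)$), whereas the paper invokes the Ambrosetti--Rabinowitz-type lower bound $f(z)H(t)\ge Kf(z)|t|^r$ from $(H_2)$; both give the same $O(t^{r})$ control and lead to the identical conclusion.
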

\begin{proof}Let $t>0$ and $\phi \in C^\infty(\mathbb{R}^N)$. Assume that for some bounded subsets $\Omega_0$ and $\Omega_1$, we have  $\Omega_0\subset\mbox{supp}(\phi) \subset \Omega_1 \subset \mathbb{R}^N,$ $0\leq \phi \leq 1$ on $\Omega_1$, and $\phi =1$ on $\Omega_0$.
Then by $(H_2)$, we can  find  $K>0$, such that for all $(z,t)\in \mathbb{R}^N \times \mathbb{R}$, we have
$$f(z)H(t)\geq K f(z) |t|^r.$$
Invoking $(H_1)$-$(H_3)$ and equation \eqref{esti},  we get
\begin{eqnarray*}
J_\mu(t \phi)&=&\frac{t^p}{p}\|\phi\|^p-\frac{t^{1-\theta}}{1-\theta}\int_{\mathbb{R}^N}a(z) \phi^{1-\theta}\,dz-\mu\int_{\mathbb{R}^N}f(z) H(t \phi)\,dz\\
&\leq& \frac{t^p}{p}\|\phi\|^p-\frac{t^{1-\theta}}{1-\theta}\int_{\mathbb{R}^N}a(z) \phi^{1-\theta}\,dz-\mu K t^r \|\phi\|_{r,f}^r \\
&\leq&  t^r\left(\frac{1}{p}\|\phi\|^p+\mu K  \|\phi\|_{r,f}^r\right)-\frac{t^{1-\theta}}{1-\theta}\int_{\mathbb{R}^N}a(z) \phi^{1-\theta}\,dz\\
&\leq& t^{1-\theta}	\left[ t^{r+\theta-1}\left(\frac{1}{p}\|\phi\|^p+\mu K  \|\phi\|_{r,f}^r\right)-\frac{1}{1-\theta}\int_{\mathbb{R}^N}a(z) \phi^{1-\theta}\,dz\right]\\
&<&0,\;\;\;\mbox{ for all }\;t\in (0,\xi^{\frac{1}{r+\theta-1}}),
\end{eqnarray*}
where $$\xi=\min\left(1,\frac{\frac{t^{1-\theta}}{1-\theta}\int_{\mathbb{R}^N}a(z) \phi^{1-\theta}\,dz}{\frac{1}{p}\|\phi\|^p+\mu K  \|\phi\|_{r,f}^r}\right).$$
This completes the proof of Lemma \ref{tphi}.
\end{proof}
We note that according to  Lemma \ref{coer}, we can define the following 
$$m_\mu=\displaystyle\inf_{\varphi\in E}J_\mu(\varphi)$$
and by Lemma \ref{tphi}, we have  $m_\mu<0$.
\begin{lemma}\label{min} The functional $ J_\mu$ attains its global minimizer on $E$. That is, there exists $\varphi_\mu \in E$ such that $$J_\mu(\varphi_\mu)=m_\mu<0.$$
\end{lemma}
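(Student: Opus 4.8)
The plan is to use the direct method of the calculus of variations. By Lemma \ref{coer}, the functional $J_\mu$ is coercive and bounded from below on $E$, so $m_\mu=\inf_{\varphi\in E}J_\mu(\varphi)$ is a finite real number, and by Lemma \ref{tphi} we know $m_\mu<0$. First I would pick a minimizing sequence $(\varphi_n)\subset E$ with $J_\mu(\varphi_n)\to m_\mu$. Coercivity forces $(\varphi_n)$ to be bounded in $E$; since $E=W^{2,p}(\mathbb{R}^N)$ is a reflexive Banach space (being uniformly convex for $1<p<\infty$), after passing to a subsequence we may assume $\varphi_n\rightharpoonup\varphi_\mu$ weakly in $E$ for some $\varphi_\mu\in E$.

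Next I would pass to the limit term by term in \eqref{jmu}. The norm term $\frac{1}{p}\|\varphi\|^p$ is weakly lower semicontinuous, since $\|\cdot\|$ is a norm on $E$ (using \eqref{lamb}), so $\|\varphi_\mu\|^p\le\liminf_n\|\varphi_n\|^p$. For the two integral terms I would use the compact embeddings: by the result of Dhifli and Alsaedi quoted before Remark \ref{rem}, the embedding $E\hookrightarrow L^r(\mathbb{R}^N,f)$ is compact under $(H_1)$, hence $\varphi_n\to\varphi_\mu$ strongly in $L^r(\mathbb{R}^N,f)$; combined with the growth bound $0\le H(t)\le \frac{c_1}{r}|t|^r$ from $(H_1)$ and a generalized dominated convergence / Vitali argument, this gives $\int_{\mathbb{R}^N}f(z)H(\varphi_n)\,dz\to\int_{\mathbb{R}^N}f(z)H(\varphi_\mu)\,dz$. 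Similarly, via Remark \ref{rem} applied with $1-\theta$ in place of $r$ and $a$ in place of $f$, the embedding $E\hookrightarrow L^{1-\theta}(\mathbb{R}^N,a)$ is compact, so $\int_{\mathbb{R}^N}a(z)\varphi_n^{1-\theta}\,dz\to\int_{\mathbb{R}^N}a(z)\varphi_\mu^{1-\theta}\,dz$. Putting these together yields $J_\mu(\varphi_\mu)\le\liminf_n J_\mu(\varphi_n)=m_\mu$, and since $\varphi_\mu\in E$ we have the reverse inequality $J_\mu(\varphi_\mu)\ge m_\mu$, hence $J_\mu(\varphi_\mu)=m_\mu<0$. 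In particular $\varphi_\mu\not\equiv 0$, because $J_\mu(0)=0>m_\mu$.

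A subtle point I would address carefully is the meaning of the singular term $\int_{\mathbb{R}^N}a(z)\varphi^{1-\theta}\,dz$ and the sign/positivity of the minimizer: one works with $|\varphi|$ or replaces $\varphi$ by $\varphi^+$ in the singular term, noting $J_\mu(|\varphi|)\le J_\mu(\varphi)$ so the minimizer may be taken nonnegative, and then the estimate of Remark \ref{rem} guarantees this integral is finite and continuous along the minimizing sequence. The main obstacle is precisely the passage to the limit in the singular integral $\int a(z)\varphi_n^{1-\theta}$: because $t\mapsto t^{1-\theta}$ is only Hölder continuous at $0$ and the domain is unbounded, one cannot invoke naive pointwise-plus-domination arguments directly; the resolution is to use the compactness of $E\hookrightarrow L^{1-\theta}(\mathbb{R}^N,a)$ from Remark \ref{rem} together with the continuity of the Nemytskii-type map $\varphi\mapsto\varphi^{1-\theta}$ from $L^{1-\theta}(\mathbb{R}^N,a)$-type spaces, or equivalently a Vitali convergence theorem controlling the tails via the integrability $a\in L^{\frac{p^\ast}{p^\ast+\theta-1}}(\mathbb{R}^N)$ assumed in $(H_3)$.
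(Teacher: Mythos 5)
Your proposal is correct and follows essentially the same route as the paper: a minimizing sequence, boundedness from coercivity (Lemma \ref{coer}), weak convergence in the reflexive space $E$, weak lower semicontinuity of the norm, and compactness/Vitali-type arguments (as in Remark \ref{rem}) to pass to the limit in the two integral terms, concluding $J_\mu(\varphi_\mu)=m_\mu<0$ by Lemma \ref{tphi}. Your additional remarks on the sign of the minimizer and the Nemytskii continuity are finer points the paper leaves implicit, but they do not change the argument.
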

\begin{proof} Let $\{\varphi_{n}\}$ be a minimizing sequence for $J_\mu$, which means that $J_\mu(\varphi_{n}) \to m_\mu$, as $n\to \infty$. Since $J_\mu$ is coercive, it follows that $\{\varphi_{n}\}$ is bounded on $E$. Indeed, if not, then up to a subsequence,  we can assume that $\|\varphi_{n}\| \to \infty$. Therefore, the coercivity of $J_\mu$, implies that $J_\mu(\varphi_{n})\to \infty$, which is a contradiction. Hence $\{\varphi_{n}\}$ is bounded. Therefore, there exist $\varphi_\mu \in E$ and  a subsequence still denoted by $\{\varphi_{n}\}$, such that, as $n$ tends to infinity, we have
\begin{equation}\label{conv}
  \begin{array}{ll}
\varphi_{n} \hookrightarrow \varphi_\mu, \mbox{ weakly in }\;\;E, \\
\varphi_{n} \to \varphi_\mu, \mbox{ strongly  in }\;\;L^r(\mathbb{R}^N, f), \\
\varphi_{n} \to  \varphi_\mu, \mbox{ a.e.  in }\;\;\mathbb{R}^N.
  \end{array}
\end{equation}
Since $\{\varphi_{n}\}$ is bounded on $E$, it follows by the Sobolev embedding theorem, that $\{\varphi_{n}\}$ is
 bounded on  $L^{p^\ast}(\mathbb{R}^N)$. On the other hand, by Remark \ref{rem}, we have
$$\int_{\mathbb{R}^N}a(z)|\varphi_{n}|^{1-\theta}\,dz\leq S_{p^\ast}^{-\frac{1-\theta}{p}}|a|_{\frac{p^\ast}{p^\ast+\theta-1}}\|\varphi_{n}\|^{1-\theta}.$$
So, by  absolute continuity of $|a|_{\frac{p^\ast}{p^\ast+\theta-1}}$, we can deduce that 
$$\displaystyle\left\{\int_{\mathbb{R}^N}a(z)|\varphi_{n}|^{1-\theta}\,dz, \;n\in \mathbb{N}\right\}$$
 is equi-absolutely continuous. Therefore, by  the Vitali theorem $($see Brooks \cite{zhik}$)$, one has
\begin{equation}\label{lg}
\displaystyle\lim_{n\to \infty}\int_{\mathbb{R}^N}a(z)|\varphi_{n}|^{1-\theta}\,dz=\int_{\mathbb{R}^N}a(z)|\varphi_\mu|^{1-\theta}\,dz.
\end{equation}
Finally, by \eqref{conv} and  weak lower semi-continuity of the norm, we obtain
$$m_\mu \leq J_\mu(\varphi_\mu) \leq \displaystyle\lim_{n \to \infty}J_\mu(\varphi_{n}) =m_\mu,$$
hence \begin{equation}\label{nont}J_\mu(\varphi_\mu)=m_\mu<0.\end{equation}
This completes the proof of Lemma \ref{min}. \qed

Now we are ready to present the proof of Theorem \ref{thm}.

{\bf Proof of Theorem \ref{thm}.}
From Lemma \ref{min}, we see that $\varphi_\mu$ is a global minimizer for $J_\mu$, hence $\varphi_\mu$ satisfies
$$
0\leq J_\mu(\varphi_\mu+t \varphi)-J_\mu(\varphi_\mu), \mbox{ for all } (t,\varphi)\in (0,\infty)\times E.
$$
Dividing the above inequality  by $t>0$ and letting $t$ tend to zero, we obtain
\begin{eqnarray*}
0&\leq&\int_{\mathbb{R}^N}|\Delta \varphi_\mu|^{p-2}\Delta \varphi_\mu \Delta \varphi-\lambda \frac{|\varphi_\mu|^{p-2}\varphi_\mu \varphi}{|z|^{2p}}+|\nabla \varphi_\mu|^{p-2}\nabla \varphi_\mu \nabla \varphi \;dz\\
&&-\int_{\mathbb{R}^N}a(z) \varphi_\mu^{-\theta} \varphi \,dz-\mu \int_{\mathbb{R}^N} f(z) h(\varphi_\mu) \varphi\,dz.
\end{eqnarray*}
The fact that  $\varphi$  is arbitrary in $ E$, implies that  in the last inequality we can replace $\varphi$  by $-\varphi$, so for any $\varphi \in E$ we get
\begin{eqnarray*}
0&=&\int_{\mathbb{R}^N}|\Delta \varphi_\mu|^{p-2}\Delta \varphi_\mu \Delta \varphi-\lambda \frac{|\varphi_\mu|^{p-2}\varphi_\mu \varphi}{|z|^{2p}}+|\nabla \varphi_\mu|^{p-2}\nabla \varphi_\mu \nabla \varphi \;dz\\
&&-\int_{\mathbb{R}^N}a(z) \varphi_\mu^{-\theta} \varphi \,dz-\mu \int_{\mathbb{R}^N} f(z) h(\varphi_\mu) \varphi\,dz.
\end{eqnarray*}
That is, $\varphi_\mu$  is a weak solution for problem \eqref{p}. Moreover, from equation \eqref{nont} we see that $\varphi_\mu$ is nontrivial.
 This completes the proof of Theorem  \ref{thm}.
\end{proof}

\section{Fibering maps on Nehari manifold sets}\label{s00}
In order to prove Theorem \ref{thmm}, we first need
to study the fibering maps on  Nehari manifold sets. 
First, let us mention that the functional  $J_\mu$ defined in equation \eqref{jmu} is Fr\'echet differentiable. Moreover,
 for all $(\varphi,\psi)\in E\times E,$ we have
\begin{eqnarray*}J'_\mu(\varphi) \psi&=&\int_{\mathbb{R}^N}|\Delta \varphi|^{p-2}\Delta \varphi \Delta \psi -\lambda \frac{|\varphi|^{p-2}\varphi \psi}{|z|^{2p}}+|\nabla \varphi|^{p-2}\nabla \varphi \nabla \psi \,dz\\
&&-\int_{\mathbb{R}^N}a(z) \varphi^{-\theta} \psi\,dz-\frac{\mu}{r}\int_{\mathbb{R}^N} g(z,\varphi) \psi\,dz.
\end{eqnarray*}
It is obvious that  $J_\mu$ is not bounded from below on $E$. We introduce the following set
$$
 N_\mu=\left\{\varphi\in E; \;J_\mu'(\varphi) \varphi =0\right\}.
$$
Note that a function $\varphi\in E$ is a weak solution for problem \eqref{p}, if it   satisfies $J'_\mu(\varphi)=0$, that is, $\varphi$ is a critical value for $J_\mu$. Clearly, $\varphi\in N_\mu$ if and only if
\begin{equation}
 \label{u in Nnu}
 \|\varphi\|^p-\int_{\mathbb{R}^N}a(z) \varphi^{1-\theta}\,dz-{\mu}\int_{\mathbb{R}^N} G(z,\varphi(z))\,dz=0.
\end{equation}
\begin{lemma}
 \label{coercive}
The functional $J_\mu$ is coercive  and  bounded from below on $N_\mu$.
\end{lemma}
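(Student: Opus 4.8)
The plan is to reduce $J_\mu$ on $N_\mu$ to a coercive expression in $\|\varphi\|$ alone, exactly as coercivity of $J_\mu$ on the whole of $E$ was obtained in Lemma \ref{coer}; the point is that on $N_\mu$ the superlinear term $\int_{\mathbb{R}^N}G(z,\varphi)\,dz$ (of degree $r>p$) is no longer dangerous, because the Nehari identity lets us trade it for $\|\varphi\|^p$ together with the singular term, which has the small degree $1-\theta<1<p$.

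Concretely, I would fix $\varphi\in N_\mu$ and combine the defining relation \eqref{u in Nnu} with the homogeneity identity $\varphi g(z,\varphi)=rG(z,\varphi)$ of \eqref{M} in order to express $\mu\int_{\mathbb{R}^N}G(z,\varphi)\,dz$ through $\|\varphi\|^p$ and $\int_{\mathbb{R}^N}a(z)\varphi^{1-\theta}\,dz$, and then substitute this into \eqref{jmu}. After collecting terms the coefficient of $\|\varphi\|^p$ becomes $\tfrac1p-\tfrac1r$, which is \emph{strictly positive} precisely because $(H_4)$ requires $p<r$, while the coefficient of the singular integral equals $\tfrac1r-\tfrac{1}{1-\theta}<0$. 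Thus
\begin{equation*}
J_\mu(\varphi)=\Bigl(\frac1p-\frac1r\Bigr)\|\varphi\|^p+\Bigl(\frac1r-\frac{1}{1-\theta}\Bigr)\int_{\mathbb{R}^N}a(z)\,\varphi^{1-\theta}\,dz,\qquad\varphi\in N_\mu.
\end{equation*}
It then remains to control the singular integral from above: arguing exactly as in Remark \ref{rem}, but now invoking $(H_5)$ together with \eqref{const}, one obtains an estimate of the form $\int_{\mathbb{R}^N}a(z)\varphi^{1-\theta}\,dz\le c\,\|\varphi\|^{1-\theta}$ for some $c>0$. Since the coefficient $\tfrac1r-\tfrac1{1-\theta}$ is negative, this gives
\begin{equation*}
J_\mu(\varphi)\ge\Bigl(\frac1p-\frac1r\Bigr)\|\varphi\|^p-C\,\|\varphi\|^{1-\theta},\qquad\varphi\in N_\mu,
\end{equation*}
with $C>0$; because $0<1-\theta<1<p$, the right-hand side tends to $+\infty$ as $\|\varphi\|\to\infty$ and the scalar map $t\mapsto(\tfrac1p-\tfrac1r)t^p-Ct^{1-\theta}$ is bounded below on $[0,\infty)$, which yields at once the coercivity and the lower boundedness of $J_\mu$ on $N_\mu$.

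The only step that calls for a little care is the first one: one must keep track of the exponents so as to verify that, after eliminating $\int G$ by means of \eqref{u in Nnu}, the leading coefficient $\tfrac1p-\tfrac1r$ is genuinely positive — this is the structural role played by the hypothesis $r>p$ of $(H_4)$, in contrast with the sublinear regime $r<p$ underlying Theorem \ref{thm}. Once this is settled, the remaining estimates are a verbatim repetition of those already used in the proof of Lemma \ref{coer} and in Remark \ref{rem}, and require no new compactness or regularity input.
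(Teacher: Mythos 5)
Your proposal is correct and follows essentially the same route as the paper: substitute the Nehari identity \eqref{u in Nnu} into \eqref{jmu} to get $J_\mu(\varphi)=\frac{r-p}{pr}\|\varphi\|^p-\frac{\theta+r-1}{r(1-\theta)}\int_{\mathbb{R}^N}a(z)\varphi^{1-\theta}\,dz$, then bound the singular integral by $c\|\varphi\|^{1-\theta}$ via H\"older with $(H_5)$ and \eqref{const}, and conclude from $1-\theta<p<r$. No discrepancies worth noting.
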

\begin{proof}
 Let $\varphi\in N_\mu$. Then, by equations \eqref{const}, \eqref{u in Nnu} and the  H\"{o}lder inequality, we obtain
 \begin{eqnarray}
    J_\mu(\varphi)&=&\frac{1}{p}\|\varphi\|^p-\frac{1}{1-\theta}\int_{\mathbb{R}^N}a(z) \varphi^{1-\theta}\,dz-\frac{\mu}{r}\int_{\mathbb{R}^N} G(z,\varphi(z))\,dz\nonumber\\
             &\ge& \frac{r-p}{pr}\|\varphi\|^p-\frac{\theta+r-1}{r(1-\theta)}\int_{\mathbb{R}^N}a(z) |\varphi|^{1-\theta}\,dz\nonumber\\
             &\ge& \frac{r-p}{pr}\|\varphi\|^p-\frac{\theta+r-1}{r(1-\theta)}S_p^\frac{\theta-1}{p}\|a\|_\frac{p}{\theta+p-1}\|\varphi\|^{1-\theta}\label{eq coer}
 \end{eqnarray}
Since $0<1-\theta<1<p<r$, it follows that   $J_\mu$ is coercive and bounded from below on $N_\mu$.
This completes the proof of Lemma \ref{coercive}.
\end{proof}
Next, we define a function $\phi_{\mu,\varphi}$ on $[0, +\infty),$ introduced in 
Dr\v{a}bek and Poho\v{z}aev  \cite{Drabek},   
as follows
\[
 \phi_{\mu,\varphi}(t):=J_\mu(t\varphi)=\frac{t^p}{p}\|\varphi\|^p-\frac{t^{1-\theta}}{1-\theta}\int_{\mathbb{R}^N}a(z) \varphi^{1-\theta}\,dz-\frac{\mu t^r}{r}\int_{\mathbb{R}^N} G(z,\varphi(z))\,dz.
\]
A simple calculation shows that
$$\phi'_{\mu,\varphi}(t)=t^{p-1}\|\varphi\|^p-{t^{-\theta}}\int_{\mathbb{R}^N}a(z) \varphi^{1-\theta}\,dz-{\mu t^{r-1}}\int_{\mathbb{R}^N} G(z,\varphi(z))\,dz,$$
and
$$\phi''_{\mu,\varphi}(t)=(p-1)t^{p-2}\|\varphi\|^p+{\theta t^{-\theta-1}}\int_{\mathbb{R}^N}a(z) \varphi^{1-\theta}\,dz-{\mu(r-1)t^{r-2}}\int_{\mathbb{R}^N} G(z,\varphi(z))\,dz.$$
Since $t\phi'_{\mu,\varphi}(t)=<J_\mu'(t \varphi),t \varphi>$, it follows that for $t>0$ and $\varphi\in E\setminus\{0\}$, we have
$$\phi'_{\mu,\varphi}(t)=0 \mbox{ if and only if } t \varphi\in N_\mu.$$
In particular, $\varphi\in N_\mu$ if and only if $\phi'_{\mu,\varphi}(1)=0$. On the other hand, it follows
by equation \eqref{u in Nnu},  that for all $\varphi\in N_{\mu},$ one has
\begin{eqnarray}
 \phi''_{\mu,\varphi}(1)  &=& (p-r)\|\varphi\|^p+{(\theta+r-1) }\int_{\mathbb{R}^N}a(z) \varphi^{1-\theta}\,dz\label{eq4.4} \\
                 &=& (\theta+p-1)\|\varphi\|^p-\mu{(\theta+r-1) }\int_{\mathbb{R}^N}G(z,\varphi(z))\,dz.\label{eq4.5}
\end{eqnarray}
\\
Now, in order to obtain the multiplicity of solutions, we split $N_\mu$ into three  parts
\[
 N_\mu^+=\left\{\varphi\in N_\mu\backslash\{0\}; \phi_{\mu,\varphi}''(1)>0\right\},
\]
\[
 N_\mu^-=\left\{\varphi\in N_\mu\backslash\{0\}; \phi_{\mu,\varphi}''(1)<0\right\},
\]
and
\[
 N_\mu^0=\left\{\varphi\in N_\mu\backslash\{0\}; \phi_{\mu,\varphi}''(1)=0\right\}.
\]
In the following lemmas we shall present some important properties related to the   subsets
introduced above.
\begin{lemma}
 \label{Lagrange}
 If 
 $u\not\in N^0_\mu$ is a local mimimizer for $J_\mu$ on $N_\mu$, then $J'_\mu(\varphi)=0$.
\end{lemma}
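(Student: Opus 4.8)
The plan is to use the Lagrange multiplier rule on the constrained minimization problem, exploiting the fact that $\varphi \notin N_\mu^0$ forces the constraint to be non-degenerate. First I would introduce the constraint functional $\Psi(\varphi) := J_\mu'(\varphi)\varphi = \|\varphi\|^p - \int_{\mathbb{R}^N} a(z)\varphi^{1-\theta}\,dz - \mu\int_{\mathbb{R}^N} G(z,\varphi(z))\,dz$, so that $N_\mu = \Psi^{-1}(0) \setminus \{0\}$ (after noting $\varphi \neq 0$, which holds since $J_\mu(\varphi) < 0$ at a minimizer, as established via Lemma~\ref{tphi}-type arguments, while $J_\mu(0)=0$). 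Since $\varphi$ is a local minimizer of $J_\mu$ restricted to $N_\mu$, the classical Lagrange multiplier theorem in Banach spaces yields a real number $\zeta$ such that $J_\mu'(\varphi) = \zeta\,\Psi'(\varphi)$ in $E^*$, provided $\Psi'(\varphi) \neq 0$; the differentiability of $\Psi$ follows from that of $J_\mu$ together with the regularity of the singular and $G$-terms used earlier.

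The key computation is to evaluate $\Psi'(\varphi)\varphi$. A direct differentiation (using $(H_4)$, i.e. $\varphi g(z,\varphi) = r G(z,\varphi)$, to handle the $G$-term, and the homogeneity of the other terms) gives
\[
\Psi'(\varphi)\varphi = p\|\varphi\|^p - (1-\theta)\int_{\mathbb{R}^N} a(z)\varphi^{1-\theta}\,dz - \mu r\int_{\mathbb{R}^N} G(z,\varphi(z))\,dz.
\]
Combining this with the constraint equation \eqref{u in Nnu} to eliminate one term, one recognizes the right-hand side as exactly $\phi_{\mu,\varphi}''(1)$ up to sign and rearrangement — precisely the expressions in \eqref{eq4.4} and \eqref{eq4.5}. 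Hence $\Psi'(\varphi)\varphi = \phi_{\mu,\varphi}''(1)$. Since $\varphi \notin N_\mu^0$, we have $\phi_{\mu,\varphi}''(1) \neq 0$, so $\Psi'(\varphi)\varphi \neq 0$, which in particular forces $\Psi'(\varphi) \neq 0$. Thus the Lagrange multiplier rule applies and produces the scalar $\zeta$ with $J_\mu'(\varphi) = \zeta\,\Psi'(\varphi)$.

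To finish, I would test this identity against $\varphi$ itself: $J_\mu'(\varphi)\varphi = \zeta\,\Psi'(\varphi)\varphi$. But $J_\mu'(\varphi)\varphi = \phi_{\mu,\varphi}'(1) = 0$ because $\varphi \in N_\mu$. Therefore $\zeta\,\Psi'(\varphi)\varphi = 0$, and since $\Psi'(\varphi)\varphi \neq 0$ we conclude $\zeta = 0$. Substituting back gives $J_\mu'(\varphi) = 0$ in $E^*$, which is the claim. The main obstacle I anticipate is the rigorous justification of differentiability and of the Lagrange multiplier rule in the presence of the singular term $a(z)\varphi^{-\theta}$ — one must ensure $J_\mu$ and $\Psi$ are genuinely $C^1$ (or at least Gateaux differentiable with the appropriate continuity) near $\varphi$, which relies on $\varphi$ being, in a suitable sense, bounded away from zero on its support, or on the integrability estimates from Remark~\ref{rem} and hypothesis $(H_5)$; the algebraic identification $\Psi'(\varphi)\varphi = \phi_{\mu,\varphi}''(1)$ is then routine bookkeeping using \eqref{eq4.4}--\eqref{eq4.5}.
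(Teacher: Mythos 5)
Your proof is correct and follows essentially the same route as the paper: both apply the Lagrange multiplier rule to the constraint $I_\mu(\varphi)=J'_\mu(\varphi)\varphi=0$, test the resulting identity against $\varphi$, and use $\phi''_{\mu,\varphi}(1)\neq 0$ (i.e.\ $\varphi\notin N^0_\mu$) to force the multiplier to vanish. Your additional verifications --- that $\Psi'(\varphi)\varphi$ coincides with $\phi''_{\mu,\varphi}(1)$ on $N_\mu$ and that this non-vanishing also guarantees the non-degeneracy hypothesis of the multiplier rule --- are details the paper leaves implicit, but the argument is the same.
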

\begin{proof}
  Since $\varphi$ is a minimizer for $J_\mu$ under the following  constraint
  $$I_\mu(\varphi):= J'_\mu(\varphi) \varphi=0,$$
 the  Lagrange multipliers theory implies the existence of  $\xi\in\mathbb{R}$ such that $J'_\mu(\varphi)= I'_\mu(\varphi) \xi$. Thus
$$J'_\mu(\varphi) \varphi =(I'_\mu(\varphi) \varphi) \xi=\phi''_{\mu,\varphi}(1) \xi=0.$$
The fact that  $\varphi\not\in N^0_\mu$, implies that $\phi''_{\mu, \varphi}(1)\neq0$.  So, $\xi=0$, which completes the proof of Lemma \ref{Lagrange}.
\end{proof}

\begin{lemma}
 \label{mu0}
 There exists $\mu_0$ such that if $\mu\in(0,\mu_0)$ then  the set $N^0_\mu$ is empty.
\end{lemma}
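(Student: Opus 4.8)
\textbf{Proof proposal for Lemma \ref{mu0}.}
The plan is to argue by contradiction: suppose that for every $\mu>0$ the set $N^0_\mu$ is nonempty, and derive a contradiction by producing, for small $\mu$, a lower bound on $\|\varphi\|$ that is incompatible with an upper bound on $\|\varphi\|$ coming from the two expressions \eqref{eq4.4}--\eqref{eq4.5} for $\phi''_{\mu,\varphi}(1)$. Concretely, fix $\varphi\in N^0_\mu$, so that $\varphi\in N_\mu$ and simultaneously $\phi''_{\mu,\varphi}(1)=0$. The first task is to show $\varphi\ne 0$ can be normalized or at least that $\|\varphi\|$ is bounded away from $0$ uniformly in $\mu$; this will follow from \eqref{eq4.4}: setting $\phi''_{\mu,\varphi}(1)=0$ there gives
\[
(r-p)\|\varphi\|^p=(\theta+r-1)\int_{\mathbb{R}^N}a(z)\varphi^{1-\theta}\,dz,
\]
and then estimating the right-hand side by Remark \ref{rem} (or by \eqref{const} with the H\"older inequality, exactly as in \eqref{eq coer}) yields
\[
(r-p)\|\varphi\|^p\le (\theta+r-1)\,S_p^{\frac{\theta-1}{p}}\|a\|_{\frac{p}{\theta+p-1}}\|\varphi\|^{1-\theta},
\]
hence a lower bound $\|\varphi\|^{p+\theta-1}\ge C_1$ with $C_1$ depending only on $p,r,\theta,\|a\|_{\frac{p}{\theta+p-1}}$ and $S_p$, \emph{independent of} $\mu$.

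Next I would exploit the second representation \eqref{eq4.5}. Setting $\phi''_{\mu,\varphi}(1)=0$ there gives
\[
(\theta+p-1)\|\varphi\|^p=\mu(\theta+r-1)\int_{\mathbb{R}^N}G(z,\varphi(z))\,dz,
\]
and bounding the integral above using the estimate $|G(z,\varphi)|\le M|\varphi|^r$ from \eqref{M} together with the Sobolev inequality \eqref{const} (with exponent $r\in(p,p^\ast)$, which is admissible since $p<r$ and, after possibly strengthening $(H_4)$, $r\le p^\ast$) produces
\[
(\theta+p-1)\|\varphi\|^p\le \mu(\theta+r-1)M\,S_r^{-\frac{r}{p}}\|\varphi\|^r,
\]
so that $\|\varphi\|^{r-p}\ge C_2/\mu$ for a positive constant $C_2=C_2(p,r,\theta,M,S_r)$ independent of $\mu$. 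Combining the two bounds, $\|\varphi\|$ is bounded below by a $\mu$-independent constant and, from the second inequality rewritten the other way, one also needs an \emph{upper} bound; this is obtained by going back to \eqref{eq4.4} in the form
\[
\|\varphi\|^p=\frac{\theta+r-1}{r-p}\int_{\mathbb{R}^N}a(z)\varphi^{1-\theta}\,dz\le \frac{\theta+r-1}{r-p}S_p^{\frac{\theta-1}{p}}\|a\|_{\frac{p}{\theta+p-1}}\|\varphi\|^{1-\theta},
\]
which gives an \emph{upper} bound $\|\varphi\|^{p+\theta-1}\le C_3$ as well; so actually $\|\varphi\|$ is pinned into a fixed compact interval $[c,C]\subset(0,\infty)$ independent of $\mu$. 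Feeding $\|\varphi\|\ge c$ into $(\theta+p-1)\|\varphi\|^p\le \mu(\theta+r-1)M S_r^{-r/p}\|\varphi\|^r$ and $\|\varphi\|\le C$ into the same inequality, one gets $(\theta+p-1)c^p\le \mu(\theta+r-1)MS_r^{-r/p}C^r$, i.e. $\mu\ge \mu_0$ for an explicit $\mu_0>0$. Therefore, if $\mu\in(0,\mu_0)$, no such $\varphi$ can exist, i.e. $N^0_\mu=\emptyset$.

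The main obstacle I anticipate is making the two-sided control on $\|\varphi\|$ genuinely $\mu$-independent and compatible: one must be careful that the upper bound used to derive $\mu\ge\mu_0$ comes from the $a$-term equation \eqref{eq4.4} (which has no $\mu$ in it) and not from the $G$-term equation, and that the homogeneity $G(z,t\varphi)=t^rG(z,\varphi)$ in $(H_4)$ together with $G(z,\varphi)>0$ for $\varphi\ne0$ is what guarantees the integral $\int G(z,\varphi)\,dz$ is strictly positive so that \eqref{eq4.5} is not vacuous. A secondary technical point is the admissibility of the Sobolev embedding at exponent $r$: the hypothesis $(H_4)$ as stated only says $1<p<r$, so one should either assume implicitly $r\in(p,p^\ast)$ (as is standard and as \eqref{M} together with \eqref{const} tacitly requires) or invoke the weighted embedding of Remark \ref{rem}-type if $G$ carries an integrable weight; I would state this restriction explicitly at the start of the proof. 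Once these points are fixed, the computation is the routine juggling of the inequalities above, and $\mu_0$ can be written down in closed form in terms of $p,r,\theta,M,S_r,S_p$ and $\|a\|_{\frac{p}{\theta+p-1}}$.
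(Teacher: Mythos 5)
Your proposal takes essentially the same route as the paper's proof: set $\phi''_{\mu,\varphi}(1)=0$ in both representations \eqref{eq4.4} and \eqref{eq4.5} to obtain the two identities $(r-p)\|\varphi\|^p=(\theta+r-1)\int_{\mathbb{R}^N}a(z)\varphi^{1-\theta}\,dz$ and $(\theta+p-1)\|\varphi\|^p=\mu(\theta+r-1)\int_{\mathbb{R}^N}G(z,\varphi)\,dz$, estimate the first by H\"older/Sobolev and the second by \eqref{M} and \eqref{const}, and force $\mu\ge\mu_0$. One direction-of-inequality slip needs fixing: from $(r-p)\|\varphi\|^p\le(\theta+r-1)S_p^{\frac{\theta-1}{p}}\|a\|_{\frac{p}{\theta+p-1}}\|\varphi\|^{1-\theta}$ you first assert a lower bound $\|\varphi\|^{p+\theta-1}\ge C_1$; since $p>1-\theta$, dividing by $\|\varphi\|^{1-\theta}$ yields only the \emph{upper} bound $\|\varphi\|^{p+\theta-1}\le C_3$ (which you correctly restate later), so there is no $\mu$-independent lower bound $c$, and the final inequality $(\theta+p-1)c^p\le\mu(\theta+r-1)MS_r^{-r/p}C^r$ cannot be formed as written. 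The repair is immediate and is exactly the paper's computation: the $G$-identity gives the $\mu$-dependent lower bound $\|\varphi\|^{r-p}\ge\frac{(\theta+p-1)S_r^{r/p}}{\mu(\theta+r-1)M}$, and combining it directly with the $\mu$-independent upper bound $\|\varphi\|^{\theta+p-1}\le\frac{\theta+r-1}{r-p}S_p^{\frac{\theta-1}{p}}\|a\|_{\frac{p}{\theta+p-1}}$ yields $\mu\ge\mu_0$ with the explicit $\mu_0$ of the paper. Your side remark that $(H_4)$ should implicitly require $r<p^\ast$ for the embedding \eqref{const} at exponent $r$ is a fair observation; the paper indeed uses this tacitly.
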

\begin{proof}Put
$$\mu_0=\frac{(\theta+p-1)S_r^\frac{r}{p}}{(\theta+r-1)M}\left(\frac{r-p}{(\theta+r-1)\|a\|_{\frac{p}{\theta+p-1}}S_p^\frac{1-\theta}{p}}\right)^\frac{r-p}{\theta+p-1},
$$
where $M$ is defined as in equation \eqref{M}, and let $\mu\in(0,\mu_0)$. We shall prove that $N^0_\mu=\varnothing$. Suppose to the contrary and let  $\varphi\in N^0_\mu$. Then we have
  $$0=\phi''_{\mu,\varphi}(1)=(p-1)\|\varphi\|^p+{\theta}\int_{\mathbb{R}^N}a(z) \varphi^{1-\theta}(z)\,dz-{\mu(r-1)}\int_{\mathbb{R}^N} G(z,\varphi(z))\,dz.$$
  So, it follows from \eqref{eq4.4} and \eqref{eq4.5} that
  \begin{equation}
  \label{eq4.6}
  (\theta+p-1)\|\varphi\|^p=\mu{(\theta+r-1) }\int_{\mathbb{R}^N}G(z,\varphi(z))\,dz,
  \end{equation}
and
  \begin{equation}
  \label{4.7}
   (r-p)\|\varphi\|^p={(\theta+r-1) }\int_{\mathbb{R}^N}a(z) \varphi^{1-\theta}(z)\,dz.
  \end{equation}
On the other hand, from \eqref{const} and the  H\"{o}lder inequality, we get
\begin{eqnarray*}
      \int_{\mathbb{R}^N}a(z) \varphi^{1-\theta}(z)\,dz&\le&\left(\int_{\mathbb{R}^N}|\varphi(z)|^p\,dz\right)^\frac{1-\theta}{p}\left(\int_{\mathbb{R}^N}|a(z)|^\frac{p}{\theta+p-1}\,dz\right)^\frac{\theta+p-1}{p} \\
                                              &\le&|\varphi|_p^{1-\theta}\|a\|_\frac{p}{\theta+p-1} 
                                              \le S_p^\frac{1-\theta}{p}\|a\|_\frac{p}{\theta+p-1}\|\varphi\|^{1-\theta}.
\end{eqnarray*}
So, it follows from \eqref{4.7} that
\begin{eqnarray*}
 \|\varphi\|^p&=&\frac{\theta+r-1}{r-p}\int_{\mathbb{R}^N}a(z) u^{1-\theta}(z)\,dz
        \le \frac{\theta+r-1}{r-p}S_p^\frac{1-\theta}{p}\|a\|_\frac{p}{\theta+p-1}\|\varphi\|^{1-\theta},
\end{eqnarray*}
that is,
\begin{equation}
 \label{*}
 \|\varphi\|\le\left(\frac{\theta+r-1}{r-p}S_p^\frac{\theta-1}{p}\|a\|_\frac{p}{\theta+p-1}\right)^\frac{1}{\theta+p-1}.
\end{equation}
From \eqref{const}, \eqref{M}, and \eqref{eq4.6}, we have
 \begin{eqnarray*}
 \|\varphi\|^p&=&\mu\frac{{(\theta+r-1) }}{\theta+p-1}\int_{\mathbb{R}^N}G(z,\varphi(z))\,dz\\
        &\le&\mu M\frac{{(\theta+r-1) }}{\theta+p-1}\int_{\mathbb{R}^N}|\varphi(z)|^r\,dz
        \le \mu M\frac{{(\theta+r-1) }}{\theta+p-1}S_r^{-\frac{r}{p}}\|\varphi\|^r,
\end{eqnarray*}
hence,
\begin{equation}
 \label{**}
 \|\varphi\|\ge\left(\frac{(\theta+p-1)S_p^\frac{r}{p}}{(\theta+r-1)M\mu}\right)^\frac{1}{r-p}.
\end{equation}
By combining \eqref{*} with  \eqref{**}, we obtain $\mu\ge\mu_0$,  which gives us the desired contradiction. This completes the proof of Lemma \ref{mu0}.
\end{proof}
 \begin{lemma}\label{intG>0}
Let  $\varphi\in E\setminus\{0\}$. Then there exists $\mu_1>0$ such that for all
\
$0<\mu<\mu_1,$   \
$ \phi_\varphi$ has exactly a local minimum at  $t_1$ and a local maximum at $t_2$. That is, $t_1u\in N^+_\mu$ and $t_2u\in N^-_\mu$.
\end{lemma}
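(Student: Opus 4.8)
The plan is to treat the fibering map $\phi_{\mu,\varphi}$ as a one–variable function and run the elementary calculus forced by the ordering of exponents $0<1-\theta<1<p<r$. Fix $\varphi\in E\setminus\{0\}$ (we work in the natural positive cone, so $\varphi\ge0$, $\varphi\not\equiv0$) and set
$$A=\|\varphi\|^p>0,\qquad B=\int_{\mathbb{R}^N}a(z)\varphi^{1-\theta}\,dz,\qquad C=\int_{\mathbb{R}^N}G(z,\varphi(z))\,dz.$$
Then $B$ is finite by Remark \ref{rem} (equivalently $(H_5)$) and strictly positive since $a>0$ a.e. and $\varphi\not\equiv0$, while $0<C<\infty$ by $(H_4)$ and \eqref{M}. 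Thus $\phi_{\mu,\varphi}(t)=\frac{A}{p}t^{p}-\frac{B}{1-\theta}t^{1-\theta}-\frac{\mu C}{r}t^{r}$, and multiplying $\phi'_{\mu,\varphi}(t)$ by $t^{\theta}>0$ shows that for $t>0$
$$\phi'_{\mu,\varphi}(t)=0\quad\Longleftrightarrow\quad \Psi(t):=A\,t^{p-1+\theta}-\mu C\,t^{r-1+\theta}=B.$$

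Next I would analyze $\Psi$ on $(0,\infty)$. Since $p-1+\theta>0$ we have $\Psi(0^{+})=0$, and since $r-1+\theta>p-1+\theta>0$ with $C>0$ we get $\Psi(t)\to-\infty$ as $t\to\infty$. The equation $\Psi'(t)=0$ has the unique root
$$t_{\max}=\left(\frac{(p-1+\theta)A}{\mu(r-1+\theta)C}\right)^{\frac{1}{r-p}},$$
with $\Psi'>0$ on $(0,t_{\max})$ and $\Psi'<0$ on $(t_{\max},\infty)$, so $\Psi$ strictly increases then strictly decreases, and a direct computation gives its maximal value
$$\Psi(t_{\max})=\frac{r-p}{r-1+\theta}\,A\,t_{\max}^{\,p-1+\theta}=\kappa\,\mu^{-\frac{p-1+\theta}{r-p}},$$
where $\kappa=\kappa(\varphi)>0$ does not depend on $\mu$. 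Hence $\Psi(t_{\max})\to\infty$ as $\mu\to0^{+}$, so there is $\mu_1>0$ (obtained explicitly from $\kappa\mu_1^{-(p-1+\theta)/(r-p)}=B$) such that $\Psi(t_{\max})>B$ for every $0<\mu<\mu_1$. For such $\mu$ the level set $\{\Psi=B\}$ consists of exactly two points $0<t_1<t_{\max}<t_2$, which are the only critical points of $\phi_{\mu,\varphi}$ on $(0,\infty)$; since $\phi'_{\mu,\varphi}(t)\sim -B\,t^{-\theta}<0$ as $t\to0^{+}$ and $\phi_{\mu,\varphi}(t)\to-\infty$ as $t\to\infty$, the point $t_1$ is a local minimum and $t_2$ a local maximum.

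Finally I would identify the Nehari components. As $\phi'_{\mu,\varphi}(t_i)=0$ with $t_i>0$, the relation recorded just before \eqref{eq4.4} gives $t_i\varphi\in N_\mu$. Differentiating the identity $t^{\theta}\phi'_{\mu,\varphi}(t)=\Psi(t)-B$ and evaluating at $t_i$ (where $\phi'_{\mu,\varphi}(t_i)=0$) yields $t_i^{\theta}\phi''_{\mu,\varphi}(t_i)=\Psi'(t_i)$, so $\phi''_{\mu,\varphi}(t_1)$ has the sign of $\Psi'(t_1)>0$ and $\phi''_{\mu,\varphi}(t_2)$ the sign of $\Psi'(t_2)<0$. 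Using $\phi_{\mu,t\varphi}(s)=\phi_{\mu,\varphi}(ts)$, hence $\phi''_{\mu,t_i\varphi}(1)=t_i^{2}\phi''_{\mu,\varphi}(t_i)$, we conclude $\phi''_{\mu,t_1\varphi}(1)>0$ and $\phi''_{\mu,t_2\varphi}(1)<0$, i.e. $t_1\varphi\in N^{+}_\mu$ and $t_2\varphi\in N^{-}_\mu$.

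The analysis is elementary once the exponents are ordered, so the only genuinely delicate points are (i) ensuring $B=\int_{\mathbb{R}^N}a\varphi^{1-\theta}$ is finite and strictly positive — which is where one invokes $(H_5)$ and restricts to nonnegative $\varphi$ — and (ii) the exponent bookkeeping that makes $\Psi(t_{\max})$ blow up as $\mu\to0^{+}$; everything else is the standard Dr\'abek--Poho\v{z}aev fibering picture.
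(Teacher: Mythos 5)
Your proof is correct and is essentially the paper's own argument: your $\Psi$ is exactly the paper's auxiliary function $m_\varphi$, the analysis of its unique interior maximum $t_{\max}$ (the paper's $t_0$) and the two crossings of the level $B=\int_{\mathbb{R}^N}a(z)\varphi^{1-\theta}\,dz$ is the same, and the sign identification $\phi''_{\mu,t_1\varphi}(1)>0$, $\phi''_{\mu,t_2\varphi}(1)<0$ matches the paper's use of the table of variations. Your explicit computation of $\phi''$ via $t^{\theta}\phi'_{\mu,\varphi}(t)=\Psi(t)-B$ is in fact cleaner than the paper's appeal to the variation table.

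One caveat worth recording: your threshold $\mu_1$, defined by $\kappa(\varphi)\mu_1^{-(\theta+p-1)/(r-p)}=B(\varphi)$, depends on $\varphi$. That proves the lemma as literally stated (where $\varphi$ is fixed before $\mu_1$), but the paper later sets $\mu^*=\min(\mu_0,\mu_1,\mu_2)$ and applies the lemma to arbitrary functions (e.g.\ to $\psi_\mu$ in Lemma \ref{minimu-}), so a $\varphi$-independent $\mu_1$ is actually needed. The paper gets uniformity by comparing $B\le S_p^{\frac{\theta-1}{p}}\|a\|_{\frac{p}{\theta+p-1}}\|\varphi\|^{1-\theta}$ with a lower bound for $m_\varphi(t_0)$; your argument yields the same conclusion once you insert $C\le M S_r^{-r/p}\|\varphi\|^{r}$ into $\kappa(\varphi)$, since the exponent identity $p(\theta+r-1)-r(\theta+p-1)=(r-p)(1-\theta)$ shows that $\Psi(t_{\max})\ge \mathrm{const}\cdot\mu^{-(\theta+p-1)/(r-p)}\|\varphi\|^{1-\theta}$ scales in $\|\varphi\|$ exactly as the upper bound for $B$ does, so the condition $\Psi(t_{\max})>B$ reduces to a condition on $\mu$ alone. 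Adding that one homogeneity step would make your proof fully interchangeable with the paper's.
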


\begin{proof}
Let  $\varphi\in E$ be such that   
$$\int_{\mathbb{R}^N}g(z,\varphi)dz>0
\hbox{ and }
\int_{\mathbb{R}^N}a(z)\varphi^{1-\theta}dz>0.$$
 It is easy to see that for all $t>0$, we have
 \begin{equation}
 \label{phim}
 \phi'_{\mu, \varphi}(t)=t^{-\theta}\left(m_\varphi(t)-\int_{\mathbb{R}^N}a(z)\varphi^{1-\theta}dz\right),
\end{equation}
where  $m_\varphi:[0,\infty)\to \mathbb R$ is defined by
\begin{equation*}
 \label{m(t)}
   m_\varphi(t)=t^{\theta+p-1}\|\varphi\|^p-t^{\theta+r-1}  \int_{\mathbb{R}^N}g(z,\varphi)dz.
 \end{equation*}
It is not difficult to show that   $m'_\varphi(t)=0$ if and only if $t=0$ or $t=t_0$, where
\begin{equation}
 \label{t0}
t_0=\left(\frac{(\theta+p-1)\|\varphi\|^p}{(\theta+r-1)\mu\int_{\mathbb{R}^N}g(z,\varphi)dz}\right)^\frac{1}{r-p}.
\end{equation}
 Moreover,
 \begin{equation}
  \label{m(t0)}
  m_\varphi(t_0)=\left(\mu\int_{\mathbb{R}^N}g(z,\varphi)dz\right)^{-\frac{\theta+p-1}{r-p}}\left(\left(\frac{\theta+p-1}{\theta+r-1}\right)^\frac{\theta+p-1}{r-p}-\left(\frac{\theta+p-1}{\theta+r-1}\right)^\frac{\theta+r-1}{r-p}\right)>0.
 \end{equation}
On the other hand, the table of variation of the function $ m_\varphi$ is given by
$$\small
{ \begin{array}{|c|ccccr|}
\hline
t     &0 &  & t_0 & &  \infty \\ \hline
 m_\varphi'(t) &  & + & 0  & - &        \\ \hline
      &   &   & m_\varphi(t_0) &   &           \\ 
 m_\varphi(t) &    &\nearrow & &\searrow  & \\ 
     &0 &  &  & & -\infty       \\ 
\hline
\end{array}}
$$
Now, since  $$\displaystyle0<\int_{\mathbb{R}^N}a(z) \varphi^{1-\theta}\,dz\le\frac{\theta+r-1}{r-p}S_p^\frac{\theta-1}{p}\|a\|_\frac{p}{\theta+p-1}\|\varphi\|^{1-\theta},$$
it follows by \eqref{m(t0)} that we can choose  $\mu_1>0$ small enough, so that  for all $\mu\in(0,\mu_1)$ we have
$$\frac{\theta+r-1}{r-p}S_p^\frac{\theta-1}{p}\|a\|_\frac{p}{\theta+p-1}\|\varphi\|^{1-\theta}< m_\varphi(t_0).$$
Therefore  for $\mu\in(0,\mu_1)$ we have,
$$\displaystyle0<\int_{\mathbb{R}^N}a(z) \varphi^{1-\theta}\,dz<m_\varphi(t_0).$$
Hence, from the table of variation of $ m_\varphi$, we can deduce the existence of unique $t_1$ and $t_2$ such that $0<t_1<t_0<t_2$ and
$$m_\varphi(t_1)=m_\varphi(t_2)=\int_{\mathbb{R}^N}a(z) \varphi^{1-\theta}\,dz.$$
Finally, from \eqref{phim} and the table of variation of  function $ m_\varphi$, we can see that $t_1$ and $t_2$ are the unique critical points of  function $ \phi_{\mu,u}$. More precisely, $t_1$  is a local minimum point and $t_2$ is a local maximum point.  Thus $t_1u\in N^+_\mu$ and $t_2u\in N^-_\mu$. This completes the proof of Lemma \ref{intG>0}.
\end{proof}
\begin{remark}
 It follows from Lemma \ref{intG>0} that $N^+_\mu\neq \varnothing$ and $N^-_\mu\neq \varnothing$,  provided that $0<\mu<\mu_1$.  Moreover, by Lemma \ref{mu0}, for every $0<\mu<\mu_0$, we have
 $$N_\mu=N^+_\mu\cup N^-_\mu.$$
\end{remark}
For the rest of the paper we shall set $$\mu^*=\min(\mu_0, \mu_1, \mu_2),$$  and define
$$\theta_\mu=\inf_{\varphi\in N_\mu}J_\mu(\varphi), \theta^+_\mu=\inf_{\varphi\in N^+_\mu}J_\mu(\varphi) \mbox{ and } \theta^-_\mu=\inf_{\varphi\in N^-_\mu}J_\mu(\varphi),$$
where
$$\mu_2=\frac{(\theta+p-1)S_r^\frac{r}{p}}{(\theta+r-1)M}\left(\frac{(\theta+r-1)p}{(1-\theta)(r-p)}S_p^\frac{\theta-1}{p}\|a\|_\frac{p}{\theta+p-1}\right)^\frac{r-p}{\theta+p-1}.$$
\begin{lemma}
 \label{theta+-}
 If
 \
  $0<\mu<\mu^*$, then the following statments hold
 \begin{enumerate}
  \item[(i)]  
             $$ \theta_\mu\le\theta^+_\mu<0.$$
              
  \item[(ii)] There exists $C>0$ such that  
                                             $$ \theta^-_\mu\ge C>0.$$                                               
 \end{enumerate}
\end{lemma}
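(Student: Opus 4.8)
The plan is to treat the two assertions separately: part~(i) is essentially bookkeeping on the Nehari splitting, while part~(ii) carries the real content. Throughout I fix $0<\mu<\mu^*$, so in particular $\mu<\mu_1$ (hence $N^+_\mu$ and $N^-_\mu$ are non-empty, by the remark following Lemma~\ref{intG>0}) and $\mu<\mu_0$ (hence $N_\mu=N^+_\mu\cup N^-_\mu$, by Lemma~\ref{mu0}); by Lemma~\ref{coercive}, $J_\mu$ is bounded below on $N_\mu$, so $\theta_\mu$, $\theta^+_\mu$ and $\theta^-_\mu$ are finite.

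For part~(i), the inequality $\theta_\mu\le\theta^+_\mu$ is immediate from $N^+_\mu\subseteq N_\mu$, since the infimum over a larger set is no larger. To get $\theta^+_\mu<0$, I would take any $\varphi\in N^+_\mu$ and first rewrite its energy: substituting $\mu\int_{\mathbb{R}^N}G(z,\varphi)\,dz=\|\varphi\|^p-\int_{\mathbb{R}^N}a(z)\varphi^{1-\theta}\,dz$ (from \eqref{u in Nnu}) into \eqref{jmu} gives the reduced expression
\[
 J_\mu(\varphi)=\frac{r-p}{pr}\|\varphi\|^p-\frac{\theta+r-1}{r(1-\theta)}\int_{\mathbb{R}^N}a(z)\varphi^{1-\theta}\,dz
\]
already used in the proof of Lemma~\ref{coercive}. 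Next, from $\phi''_{\mu,\varphi}(1)>0$ and \eqref{eq4.4} I read off $\int_{\mathbb{R}^N}a(z)\varphi^{1-\theta}\,dz>\frac{r-p}{\theta+r-1}\|\varphi\|^p>0$; inserting this into the reduced expression yields $J_\mu(\varphi)<\frac{r-p}{r}\bigl(\frac{1}{p}-\frac{1}{1-\theta}\bigr)\|\varphi\|^p$, which is strictly negative because $0<1-\theta<1<p<r$ forces both $r-p>0$ and $\frac{1}{p}-\frac{1}{1-\theta}<0$, while $\|\varphi\|\neq 0$. Since $N^+_\mu\neq\varnothing$, this gives $\theta^+_\mu<0$.

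For part~(ii), the decisive point is a uniform positive lower bound on $\|\varphi\|$ over $N^-_\mu$. Given $\varphi\in N^-_\mu$, combining $\phi''_{\mu,\varphi}(1)<0$ with \eqref{eq4.5} gives $\mu(\theta+r-1)\int_{\mathbb{R}^N}G(z,\varphi)\,dz>(\theta+p-1)\|\varphi\|^p$, whereas \eqref{M} together with \eqref{const} gives $\int_{\mathbb{R}^N}G(z,\varphi)\,dz\le M S_r^{-\frac{r}{p}}\|\varphi\|^r$; cancelling $\|\varphi\|^p$ leads to
\[
 \|\varphi\|>d_\mu:=\left(\frac{(\theta+p-1)S_r^{\frac{r}{p}}}{\mu M(\theta+r-1)}\right)^{\frac{1}{r-p}},
\]
and $d_\mu\to\infty$ as $\mu\to 0^+$. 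On the other hand, the chain of estimates proving coercivity in Lemma~\ref{coercive} (see \eqref{eq coer}) shows $J_\mu(\varphi)\ge q(\|\varphi\|)$, where
\[
 q(s):=\frac{r-p}{pr}s^p-\frac{\theta+r-1}{r(1-\theta)}S_p^{\frac{\theta-1}{p}}\|a\|_{\frac{p}{\theta+p-1}}s^{1-\theta}.
\]
A one-variable analysis (using $1-\theta<p$) shows $q$ has a unique positive zero $s_0$ and is strictly increasing on $[s_0,\infty)$. The whole point of the definition of $\mu_2$, hence of $\mu^*=\min(\mu_0,\mu_1,\mu_2)$, is exactly that $0<\mu<\mu_2$ forces $d_\mu>s_0$; then for every $\varphi\in N^-_\mu$ we obtain $J_\mu(\varphi)\ge q(\|\varphi\|)\ge q(d_\mu)>0$, so one may take $C:=q(d_\mu)>0$ (or $C:=q(d_{\mu^*})$ for a bound independent of $\mu\in(0,\mu^*)$).

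I expect the only genuine obstacle to lie in part~(ii): one must simultaneously show that elements of $N^-_\mu$ cannot be small (giving $\|\varphi\|>d_\mu$) and that at this norm scale the crude coercivity bound $q$ has already turned positive. The second point fails when $\mu$ is large, and demanding that the two thresholds match is precisely what pins down $\mu_2$. Part~(i), by contrast, should follow mechanically from the defining sign condition of $N^+_\mu$ and the exponent ordering $1-\theta<p<r$, using nothing beyond the identities already recorded in the proof of Lemma~\ref{coercive}.
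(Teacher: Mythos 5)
Your proposal is correct and follows essentially the same route as the paper: part (i) uses \eqref{eq4.4} together with the reduced expression for $J_\mu$ on $N_\mu$ to force $J_\mu(\varphi)\le-\frac{(r-p)(\theta+p-1)}{pr(1-\theta)}\|\varphi\|^p<0$, and part (ii) derives the uniform lower bound $\|\varphi\|>d_\mu$ from \eqref{eq4.5}, \eqref{M}, \eqref{const} and then feeds it into the coercivity estimate \eqref{eq coer}, with $\mu<\mu_2$ guaranteeing the resulting expression is positive. Your packaging of the last step via the one-variable function $q$ and its zero $s_0$ is just a slightly more explicit rendering of the paper's final chain of inequalities.
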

\begin{proof}
 \begin{enumerate}
  \item[(i)] Let $\varphi\in N^+_\mu$. Then, from \eqref{eq4.4}, we get
            \begin{equation*}
             \frac{r-p}{\theta+r-1}\|\varphi\|^p<\int_{\mathbb{R}^N}a(z)\varphi^{1-\theta}dz.
            \end{equation*}
            So combining the last inequality with \eqref{u in Nnu}, we obtain
            \begin{eqnarray*}
             J_\mu(\varphi)&=&\frac{r-p}{p r}\|\varphi\|^p-\frac{\theta+r-1}{r(1-\theta)}\int_{\mathbb{R}^N}a(z)\varphi^{1-\theta}dz\\
                  &\le&-\frac{(r-p)(\theta+p-1)}{p r(1-\theta)}\|\varphi\|^p<0,
            \end{eqnarray*}
            so we conclude that $\theta_\mu\le\theta^+_\mu<0$.
  \item[(ii)] Let $\varphi\in N^-_\mu$. Then by \eqref{const} and \eqref{eq4.5} we get
              $$\|\varphi\|>\left( \frac{(\theta+p-1)S_r^\frac{r}{p}}{(\theta+r-1)\mu M}\right)^\frac{1}{r-p},$$
              where $M$ is the positive constant given by equation \eqref{M}. \\
Now, using the last inequality and   \eqref{eq coer} we get
            \begin{eqnarray*}
        J_\mu(\varphi)&\ge&\frac{r-p}{p r}\|\varphi\|^p-\frac{\theta+r-1}{r(1-\theta)}S_p^\frac{1-\theta}{p}\|a\|_\frac{p}{\theta+p-1}\|\varphi\|^{1-\theta}\\
                &\ge&\|\varphi\|^{1-\theta}\left(\frac{r-p}{p r}\|\varphi\|^{\theta+p-1}-\frac{\theta+r-1}{r(1-\theta)}S_p^\frac{1-\theta}{p}\|a\|_\frac{p}{\theta+p-1}\right)\\
                &>&\left( \frac{(\theta+p-1)S_r^\frac{r}{p}}{(\theta+r-1)\mu M}\right)^\frac{1-\theta}{r-p}\\
&&\left(\frac{r-p}{p r}\left( \frac{(\theta+p-1)S_r^\frac{r}{p}}{(\theta+r-1)\mu M}\right)^\frac{\theta+p-1}{r-p}-\frac{\theta+r-1}{r(1-\theta)}S_p^\frac{\theta-1}{p}\|a\|_\frac{p}{\theta+p-1}\right).
            \end{eqnarray*}
Since $0<\mu<\mu^*\leq \mu_2$ and $0<1-\theta\leq p<r$, it follows that $J_\mu>C,$ for some $C>0$.
 \end{enumerate}
 This completes the proof of Lemma \ref{theta+-}.
\end{proof}
Next,  we have the following results on the existence of minimizers in $N^+_\mu$ and $N^-_\mu$ for $\mu\in(0,\mu^*)$.
\begin{lemma}
 \label{minimu+}
  If
  \
  $0<\mu<\mu^*$, then there exists $\varphi_\mu\in N^+_\mu$ such that
$$\theta^+_\mu=J_\mu(\varphi_\mu).$$ That is, $J_\mu$ attains its minimum on $N^+_\mu$.

\end{lemma}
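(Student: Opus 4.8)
The plan is to obtain the minimizer by direct minimization over $N^+_\mu$. I would first take a minimizing sequence $\{\varphi_n\}\subset N^+_\mu$ with $J_\mu(\varphi_n)\to\theta^+_\mu$. By Lemma \ref{coercive} the functional $J_\mu$ is coercive on $N_\mu\supset N^+_\mu$, so $\{\varphi_n\}$ is bounded in $E$; up to a subsequence, $\varphi_n\rightharpoonup\varphi_\mu$ weakly in $E$, $\varphi_n\to\varphi_\mu$ in $L^{1-\theta}(\mathbb{R}^N,a)$ and a.e.\ in $\mathbb{R}^N$, and $\|\varphi_n\|\to\ell$ for some $\ell\ge\|\varphi_\mu\|$ by weak lower semicontinuity of the norm. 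Arguing exactly as in the proof of Lemma \ref{min} (equi-absolute continuity together with the Vitali theorem, and the bound $|G(z,\varphi)|\le M|\varphi|^r$ from \eqref{M}), the lower-order terms converge, so that passing to the limit in \eqref{u in Nnu} yields $\ell^p=\int_{\mathbb{R}^N}a\varphi_\mu^{1-\theta}\,dz+\mu\int_{\mathbb{R}^N}G(z,\varphi_\mu)\,dz$. Since $\theta^+_\mu<0$ by Lemma \ref{theta+-}(i), one cannot have $\varphi_\mu=0$: otherwise both lower-order integrals tend to $0$ and $J_\mu(\varphi_n)=\frac{1}{p}\|\varphi_n\|^p+o(1)\ge o(1)$, a contradiction. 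Hence $\int_{\mathbb{R}^N}a\varphi_\mu^{1-\theta}\,dz>0$ and, by $(H_4)$, $\int_{\mathbb{R}^N}G(z,\varphi_\mu)\,dz>0$, so Lemma \ref{intG>0} applies to $\varphi_\mu$ and gives a unique $t_1>0$ with $t_1\varphi_\mu\in N^+_\mu$, $t_1$ being the abscissa of the local minimum of the fibering map $\phi_{\mu,\varphi_\mu}$ (and a unique $t_2>t_1$ at which $\phi_{\mu,\varphi_\mu}$ has a local maximum).

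The crux of the argument is to prove $\varphi_n\to\varphi_\mu$ strongly in $E$. Suppose not; then $\|\varphi_\mu\|^p<\ell^p$, whence, by the displayed identity for $\ell^p$, $\phi'_{\mu,\varphi_\mu}(1)=\|\varphi_\mu\|^p-\ell^p<0$. In view of the shape of $\phi_{\mu,\varphi_\mu}$ in Lemma \ref{intG>0} (decreasing on $(0,t_1)$, increasing on $(t_1,t_2)$, decreasing on $(t_2,\infty)$), this forces $1<t_1$ or $1>t_2$. To discard the second alternative I would pass to the limit in the fibering maps: put $\Phi(t):=\lim_n\phi_{\mu,\varphi_n}(t)=\frac{t^p\ell^p}{p}-\frac{t^{1-\theta}}{1-\theta}\int_{\mathbb{R}^N}a\varphi_\mu^{1-\theta}\,dz-\frac{\mu t^r}{r}\int_{\mathbb{R}^N}G(z,\varphi_\mu)\,dz$. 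Then $\Phi'(1)=\lim_n\phi'_{\mu,\varphi_n}(1)=0$ since $\varphi_n\in N_\mu$; moreover, because $\varphi_n\in N^+_\mu$ we have $1=t_1(\varphi_n)<t_0(\varphi_n)$, where $t_0(\varphi_n)$ is the maximum point of $m_{\varphi_n}$ (here $\mu<\mu^*\le\mu_2$ is used so that Lemma \ref{intG>0} applies to every $\varphi_n$), hence $\lim_n t_0(\varphi_n)\ge1$, which forces the local minimum of $\Phi$ to be located at $t=1$, i.e.\ $\Phi'\le0$ on $(0,1]$. Since $\phi'_{\mu,\varphi_\mu}(t)=\Phi'(t)-t^{p-1}(\ell^p-\|\varphi_\mu\|^p)<\Phi'(t)$ for all $t>0$, it follows that $\phi'_{\mu,\varphi_\mu}<0$ on $(0,1]$, so $t_1>1$ and $\phi_{\mu,\varphi_\mu}$ is strictly decreasing on $[1,t_1]$. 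Consequently
$$J_\mu(t_1\varphi_\mu)=\phi_{\mu,\varphi_\mu}(t_1)<\phi_{\mu,\varphi_\mu}(1)=J_\mu(\varphi_\mu)=\theta^+_\mu+\frac{\|\varphi_\mu\|^p-\ell^p}{p}<\theta^+_\mu,$$
which contradicts $t_1\varphi_\mu\in N^+_\mu$ and the definition of $\theta^+_\mu$. Therefore $\varphi_n\to\varphi_\mu$ strongly in $E$.

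Strong convergence then gives $\varphi_\mu\in N_\mu$ and $J_\mu(\varphi_\mu)=\lim_nJ_\mu(\varphi_n)=\theta^+_\mu$. Since $\mu<\mu^*\le\mu_0$, Lemma \ref{mu0} (and the remark following it) yields $N^0_\mu=\varnothing$ and $N_\mu=N^+_\mu\cup N^-_\mu$, while by Lemma \ref{theta+-}(ii) one has $J_\mu(\varphi_\mu)=\theta^+_\mu<0<C\le\theta^-_\mu$, so $\varphi_\mu\notin N^-_\mu$ and hence $\varphi_\mu\in N^+_\mu$. This shows that $\theta^+_\mu=J_\mu(\varphi_\mu)$ is attained, as claimed. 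I expect the strong-convergence step to be the main obstacle, and within it the exclusion of the case $1>t_2$: this is exactly where the explicit choice of $\mu^*$ (through $\mu_2$) enters, guaranteeing that Lemma \ref{intG>0} is available along the entire minimizing sequence and that the limiting fibering map $\Phi$ keeps its local minimum at $t=1$.
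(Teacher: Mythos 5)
Your proposal is correct and follows the same overall strategy as the paper: minimizing sequence in $N^+_\mu$, boundedness from coercivity, identification of a nontrivial weak limit with $\int_{\mathbb{R}^N}a\varphi_\mu^{1-\theta}\,dz>0$, a contradiction argument through the fibering map to force strong convergence, and finally $N^0_\mu=\varnothing$ together with $\theta^-_\mu>0>\theta^+_\mu$ to place the limit in $N^+_\mu$. The one place where you genuinely diverge is the strong-convergence step, and there your version is the more careful one. The paper argues that $\phi'_{\mu,\varphi_\mu}(t_1)=0$ implies $\phi'_{\mu,\varphi_k}(t_1)>0$ for large $k$ and concludes $t_1>1$ in a single line, silently discarding the alternative $1>t_2$ that is a priori compatible with $\phi'_{\mu,\varphi_\mu}(1)<0$. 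You instead pass to the limiting fibering map $\Phi$, use $\Phi'(1)=0$ together with $t_0(\varphi_n)>1$ (from $\varphi_n\in N^+_\mu$) to get $\Phi'\le 0$ on $(0,1]$, and then the strict inequality $\phi'_{\mu,\varphi_\mu}<\Phi'$ to locate $1$ strictly below $t_1$; this closes the case analysis the paper leaves implicit, and your extra observation that $J_\mu(\varphi_\mu)<\theta^+_\mu$ under the failure of strong convergence makes the final contradiction even cleaner. The only caveat, which you inherit from the paper rather than introduce, is the convergence of $\int_{\mathbb{R}^N}G(z,\varphi_n)\,dz$: the bound $|G(z,\varphi)|\le M|\varphi|^r$ alone does not yield equi-integrability over all of $\mathbb{R}^N$, and the paper's own claim of strong convergence in $L^q(\mathbb{R}^N)$ for $p<q<p^*$ rests on the same unproved compactness; both proofs share this issue, so it does not count against your argument relative to the paper's.
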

\begin{proof}
 Since $J_\mu$ is bounded from below on $N_\mu$ and hence also on $N^+_\mu$,  there exists  $\{\varphi_k\}\subset N^+_\mu$ such that
$$\lim_{k\to\infty}J_\mu(\varphi_k)=\inf_{\varphi\in N^+_\mu}J_\mu(\varphi).$$
Since $J_\mu$ is coercive on $N_\mu$, it follows that  $\{\varphi_k\}$ is bounded on $E$. So,  there exist $\varphi_\mu$ and a subsequence, again  denoted  by $\{\varphi_k\},$ such that as $k$ tends to infinity, we have
$$\begin{cases}
 \varphi_k \rightharpoonup \varphi_\mu \mbox{ weakly in } E \\
 \varphi_k\longrightarrow \varphi_\mu \mbox{ strongly in } L^q(\mathbb{R}^N), \mbox{for all}\;\;p<q<p^*,\\
 \varphi_k\longrightarrow \varphi_\mu \mbox{ a.e } \mathbb{R}^N.
\end{cases}$$

From Lemma \ref{theta+-} we know that  $\displaystyle\inf_{u\in N^+_\mu}J_\mu(\varphi)<0$. On the other hand, since $\{\varphi_k\}\subset N_\mu$ we have
$$J_\mu(\varphi_k)=\frac{r-p}{p r}\|\varphi_k\|^p-\frac{\theta+r-1}{r(1-\theta)}\int_{\mathbb{R}^N}a(z)\varphi_k^{1-\theta}(z)dz,$$
so we get
$$\displaystyle\frac{\theta+r-1}{r(1-\theta)}\int_{\mathbb{R}^N}a(z)\varphi_k^{1-\theta}(z)dz=\frac{r-p}{p r}\|\varphi_k\|^p-J_\mu(\varphi_k).$$
From  \eqref{lg},  by letting  $k\to\infty$ in the last equation, we obtain
$$
 \int_{\mathbb{R}^N}a(z)\varphi_\mu^{1-\theta}(z)dz>0.
$$

We now  claim that $\varphi_k$ converges strongly to $ \varphi_\mu$ in $E$. If this were not true, then we would have
$$\|\varphi_\mu\|^p<\liminf_{k\to\infty}\|\varphi_k\|^p.$$
Since $\phi'_{\varphi_\mu}(t_1)=0,$ it would follow
that
 $\phi'_{\varphi_k}(t_1)>0$ for sufficiently large $k$.  So, we must have $t_1>1$.
 However, $t_1\varphi_\mu\in N^+_\mu$ and therefore
$$J_\mu(t_1\varphi_\mu)<J_\mu(\varphi_\mu)\le\lim_{k\to\infty}J_\mu(\varphi_k)=\inf_{u\in N^+_\mu}J_\mu(\varphi),$$
which is a contradiction, that is $\varphi_k\underset{k\to\infty}\longrightarrow \varphi_\mu$.

Since $N^0_\mu=\varnothing$, it follows that $\varphi_\mu\in N^+_\mu$.  Finally, $\varphi_\mu$ is a minimizer for $J_\mu$ on $N^+_\mu$. This completes the proof of Lemma \ref{minimu+}.
\end{proof}
\begin{lemma}
 \label{minimu-}
  If 
  \
    $0<\mu<\mu^*$, then there exists $\psi_\mu\in N^-_\mu$ such that
$$\theta^-_\mu=J_\mu(\psi_\mu).$$ That is, $J_\mu$ achieves its minimum on $N^-_\mu$.
\end{lemma}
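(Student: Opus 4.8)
The plan is to run the direct method on $N^-_\mu$, exactly parallel to the proof of Lemma \ref{minimu+}, but with extra care at the point where strong convergence is established, since on $N^-_\mu$ the fibering map has a local \emph{maximum} rather than a minimum. First I would take a minimizing sequence $\{\psi_k\}\subset N^-_\mu$ with $J_\mu(\psi_k)\to\theta^-_\mu$. By Lemma \ref{coercive}, $J_\mu$ is coercive on $N_\mu\supset N^-_\mu$, so $\{\psi_k\}$ is bounded in $E$; passing to a subsequence, we obtain $\psi_k\rightharpoonup\psi_\mu$ weakly in $E$, $\psi_k\to\psi_\mu$ strongly in $L^q(\mathbb{R}^N)$ for $p<q<p^*$ (and in $L^r(\mathbb{R}^N,\cdot)$, using $(H_4)$--$(H_5)$ and the compact embeddings from Section \ref{s0}), and $\psi_k\to\psi_\mu$ a.e.

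The second step is to show $\psi_\mu\neq 0$ and that $\psi_\mu$ stays ``inside'' the region where Lemma \ref{intG>0} applies. Because $\{\psi_k\}\subset N^-_\mu$, the characterization \eqref{eq4.5} gives $(\theta+p-1)\|\psi_k\|^p<\mu(\theta+r-1)\int_{\mathbb{R}^N}G(z,\psi_k)\,dz$, which combined with \eqref{M}, \eqref{const} and the compact convergence forces a uniform lower bound $\|\psi_k\|\ge\left(\frac{(\theta+p-1)S_r^{r/p}}{(\theta+r-1)\mu M}\right)^{1/(r-p)}>0$; this passes to the limit (via the compact convergence of the $G$-term) to yield $\int_{\mathbb{R}^N}G(z,\psi_\mu)\,dz>0$ and, in particular, $\psi_\mu\neq 0$. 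Also $\theta^-_\mu\ge C>0$ by Lemma \ref{theta+-}(ii), so the limiting value is strictly positive. Since $\int_{\mathbb{R}^N}G(z,\psi_\mu)\,dz>0$ and $\psi_\mu\neq0$, Lemma \ref{intG>0} provides unique $t_1=t_1(\psi_\mu)<t_0<t_2=t_2(\psi_\mu)$ with $t_2\psi_\mu\in N^-_\mu$; note $t_2$ is the \emph{global maximum} of $\phi_{\mu,\psi_\mu}$ on $[t_0,\infty)$.

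The third and crucial step is strong convergence $\psi_k\to\psi_\mu$ in $E$. Suppose not, so that $\|\psi_\mu\|^p<\liminf_k\|\psi_k\|^p$. I would argue with the fibering maps: using the compact convergence of the $a$-term and the $G$-term, $\phi'_{\mu,\psi_k}(t)\to\phi'_{\mu,\psi_\mu}(t)$ would fail to hold with equality in the $\|\cdot\|^p$-coefficient, and one checks that $\phi''_{\mu,\psi_\mu}(t_2)<0$ together with $\phi'_{\mu,\psi_\mu}(t_2)=0$ forces, for large $k$, the existence of $\tilde t_k$ near $t_2$ with $\tilde t_k\psi_k\in N^-_\mu$ and $\tilde t_k\to t_2$; then
\begin{equation*}
\theta^-_\mu\le J_\mu(\tilde t_k\psi_k)=\phi_{\mu,\psi_k}(\tilde t_k)<\liminf_{k\to\infty}\phi_{\mu,\psi_k}(1)=\lim_{k\to\infty}J_\mu(\psi_k)=\theta^-_\mu,
\end{equation*}
a contradiction; here the strict inequality uses that $t\mapsto\phi_{\mu,\psi_k}(t)$ on the relevant range is strictly dominated once $\|\psi_\mu\|<\liminf\|\psi_k\|$, exploiting that $t_2$ is a maximum of $\phi_{\mu,\psi_\mu}$ while $1$ may not be for $\psi_k$. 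The main obstacle is precisely making this last comparison rigorous: on $N^-_\mu$ one cannot simply say ``$J_\mu(t_2\psi_\mu)<J_\mu(\psi_\mu)$'' as in the $N^+_\mu$ case, so one must instead transport the maximizing parameter $\tilde t_k$ back to the sequence and use lower semicontinuity of the norm to get the strict drop in energy. Once strong convergence is in hand, $J_\mu(\psi_\mu)=\theta^-_\mu$, and since $N^0_\mu=\varnothing$ for $\mu<\mu^*\le\mu_0$ and $N^-_\mu$ is (relatively) closed under strong convergence plus the strict sign of $\phi''_{\mu,\psi_\mu}(1)$, we conclude $\psi_\mu\in N^-_\mu$, so $J_\mu$ attains its minimum on $N^-_\mu$ at $\psi_\mu$. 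This completes the proof.
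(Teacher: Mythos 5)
Your overall skeleton (direct method on $N^-_\mu$, coercivity, weak convergence, passing $\int_{\mathbb{R}^N}G(z,\psi_\mu)\,dz>0$ to the limit, invoking Lemma \ref{intG>0} to get $t_2\psi_\mu\in N^-_\mu$) matches the paper. But the crucial step --- the contradiction when strong convergence fails --- does not work as you set it up. You propose to find $\tilde t_k\to t_2$ with $\tilde t_k\psi_k\in N^-_\mu$ and compare $\phi_{\mu,\psi_k}(\tilde t_k)$ with $\phi_{\mu,\psi_k}(1)$. Two problems. First, by the uniqueness statement in Lemma \ref{intG>0}, the fibering map $\phi_{\mu,\psi_k}$ has exactly one local maximum, and since $\psi_k\in N^-_\mu$ that maximum is at $t=1$; so the only admissible $\tilde t_k$ is $\tilde t_k=1$, and there is in general no Nehari point of $\psi_k$ ``near $t_2$'' (the limit function $\phi_{\mu,\psi_\mu}$ has a different $\|\cdot\|^p$-coefficient, so $t_2\neq 1$ is expected). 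Second, even granting such $\tilde t_k$, your chain $\theta^-_\mu\le\phi_{\mu,\psi_k}(\tilde t_k)<\liminf_k\phi_{\mu,\psi_k}(1)=\theta^-_\mu$ compares a term of a sequence with a $\liminf$ over the same sequence; the strict inequality is neither justified nor, as written, a contradiction. In fact $\phi_{\mu,\psi_k}(\tilde t_k)\le\phi_{\mu,\psi_k}(1)\to\theta^-_\mu$ yields nothing.

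The missing idea is to keep the parameter \emph{fixed} at the value $t_2$ determined by the limit $\psi_\mu$, and to exploit that $t=1$ is the \emph{global} maximum of $\phi_{\mu,\psi_k}$ (which holds because $\psi_k\in N^-_\mu$, $\phi_{\mu,\psi_k}(1)=J_\mu(\psi_k)\ge C>0$ by Lemma \ref{theta+-}(ii), $\phi_{\mu,\psi_k}(t)\to 0$ as $t\to 0^+$ and $\to-\infty$ as $t\to\infty$, and $t_1,1$ are the only critical points). This gives $J_\mu(t\psi_k)\le J_\mu(\psi_k)$ for \emph{all} $t>0$, which is exactly \eqref{Eq3.5}. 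Then, if $\|\psi_\mu\|^p<\liminf_k\|\psi_k\|^p$, weak lower semicontinuity of the norm together with the compact convergence of the $a$- and $G$-terms gives the strict inequality
\begin{equation*}
J_\mu(t_2\psi_\mu)<\liminf_{k\to\infty}J_\mu(t_2\psi_k)\le\liminf_{k\to\infty}J_\mu(\psi_k)=\theta^-_\mu,
\end{equation*}
which contradicts $t_2\psi_\mu\in N^-_\mu$ and the definition of $\theta^-_\mu$. This is the paper's argument; with that substitution your proof goes through, and the final step ($N^0_\mu=\varnothing$ forces $\psi_\mu\in N^-_\mu$) is as you state.
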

\begin{proof}
By Lemma \ref{theta+-},  there exists $C>0$ such that for all  $\varphi\in N^-_\mu$, we have $J_\mu(\varphi)>C$.  So, there exists a minimizing sequence
  $\{\varphi_k\}\subset N^-_\mu$ such that
  $$
   \lim_{k\to\infty}J_\mu(\varphi_k)=\inf_{\varphi\in N^-_\mu}J_\mu(\varphi)>0.
 $$
Since  $J_\mu$ is coercive, we can deduce that  $\{\varphi_k\}$ is  bounded. So, for all $p\leq r<p^*$, there is a subsequence still denoted by $\{\varphi_k\}$, and $\psi_\mu\in E$ such that if $k$ tends to infinity we get
$$\begin{cases}
 \varphi_k\rightharpoonup \psi_\mu \mbox{ weakly in } E \\
 \varphi_k\longrightarrow \psi_\mu \mbox{ strongly in } L^r(\mathbb{R}^N)\\
 \varphi_k\longrightarrow \psi_\mu \mbox{ a.e. } \mathbb{R}^N.
\end{cases}$$

On the other hand, since $\{\varphi_k\}\subset N_\mu$ we have
$$J_\mu(\varphi_k)=\mu\frac{r+\theta-1}{r(1-\theta)}\int_{\mathbb{R}^N}G(z,\varphi_k(z))dz- \frac{\theta+p-1}{p(1-\theta)} \|\varphi_k\|^p,$$
which implies
$$\mu\frac{r+\theta-1}{r(1-\theta)}\int_{\mathbb{R}^N}G(z,\varphi_k)dz=J_\mu(\varphi_k)+ \frac{\theta+p-1}{p(1-\theta)}\|\varphi_k\|^p.$$

By letting  $k\to\infty$ in last equation, we obtain
$$
 \int_{\mathbb{R}^N}G(z,\psi_\mu)dz>0.
$$
Hence, by Lemma \ref{intG>0}  $\phi_{\mu, \varphi}$  has a maximum at some point $t_2$ and $t_2\psi_\mu\in N^-_\mu$. On the other hand,  $\psi_k\in N^-_\mu$ implies that $1$ is a global maximum point for $\phi_{\mu, \varphi_k}$, so we get
\begin{equation}
 \label{Eq3.5}
 J_\mu(t\varphi_k)=\phi_{\mu, \varphi_k}(t)\le\phi_{\mu, \varphi_k}(1)= J_\mu(\varphi_k), \;\mbox{for all }\;t>0.
\end{equation}

Now, we claim that  $\varphi_k\underset{k\to\infty}\longrightarrow \psi_\mu$. Suppose that  this is were not true, then we would get
$$\|\psi_\mu\|^p<\liminf_{k\to\infty}\|\varphi_k\|^p.$$
So, from equation \eqref{Eq3.5} and the Fatou lemma  we would obtain
\begin{eqnarray*}
 J_\mu(t_2\psi_\mu)&=&\frac{t_2^p}{p}\|\psi_\mu\|^p-\frac{t_2^{1-\theta}}{1-\theta}\int_{\mathbb{R}^N}a(z) \psi_\mu^{1-\theta}\,dz-\frac{\mu t_2^r}{r}
                   \int_{\mathbb{R}^N} G(z,\psi_\mu(z))\,dz\\
                &<&\liminf_{k\to\infty}\left(\frac{t_2^p}{p}\|\varphi_k\|^p-\frac{t_2^{1-\theta}}{1-\theta}\int_{\mathbb{R}^N}a(z) \varphi_k^{1-\theta}\,dz-\frac{\mu t_2^r}{r}\int_{\mathbb{R}^N} G(z,\varphi_k(z))\,dz\right)\\
                &\le&\lim_{k\to\infty}J_\mu(t_2\varphi_k)\\
                &\le&\lim_{k\to\infty}J_\mu(\varphi_k)=\inf_{\varphi\in N^-_\mu}J_\mu(\varphi),
\end{eqnarray*}
which is a contradiction. Hence, $\varphi_k\longrightarrow \psi_\mu$ as $k\to\infty$.

Since $N^0_\mu=\varnothing$, it follows that $\psi_\mu\in N^-_\mu$.  Finally, $\psi_\mu$ is a minimizer for $J_\mu$ on $N^-_\mu$. This completes the proof of Lemma \ref{minimu-}.
\end{proof}

\section{The proof of Theorem \ref{thmm}}\label{s3}

We shall  need the following two auxiliary lemmas to prove that  the  local minimum of the functional energy is a weak solution for problem \eqref{p}.
\begin{lemma}\label{impl1}Assume that hypotheses of Theorem \ref{thmm} are satisfied and $\mu\in(0,\mu^\star)$. Then the following statments hold:
\begin{enumerate}
\item[(i)]There exist $r_1>0$ and a continuous function $\rho_1: B(0,r_1)\to (0,\infty)$ such that
$$\rho_1(0)=1\;\;\mbox{and}\;\;\rho_1(\varphi)(\varphi_\mu+\varphi)\in N^+_\mu,\;\mbox{for all}\;\varphi\in B(0,r_1).$$
\item[(ii)]There exist $r_2>0$ and a continuous function $\rho_2: B(0,r_2)\to (0,\infty)$ such that
$$\rho_2(0)=1\;\;\mbox{and}\;\;\rho_2(\varphi)(\psi_\mu+\varphi)\in N^-_\mu,\;\mbox{for all}\;\varphi\in B(0,r_2).$$
\end{enumerate}
\end{lemma}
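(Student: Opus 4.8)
The plan is to treat both parts by the same mechanism: the implicit function theorem applied to a scalar constraint. For part (i), I would fix $\varphi_\mu \in N^+_\mu$ (the minimizer produced by Lemma \ref{minimu+}) and define a function
$$
F_1:\mathbb{R}\times E\longrightarrow\mathbb{R},\qquad
F_1(t,\varphi)=\|t(\varphi_\mu+\varphi)\|^p-\int_{\mathbb{R}^N}a(z)\,|t(\varphi_\mu+\varphi)|^{1-\theta}\,dz-\mu\int_{\mathbb{R}^N}G(z,t(\varphi_\mu+\varphi))\,dz.
$$
By the homogeneity from $(H_4)$ (namely $G(z,t\varphi)=t^rG(z,\varphi)$), this is
$$
F_1(t,\varphi)=t^p\|\varphi_\mu+\varphi\|^p-t^{1-\theta}\!\int_{\mathbb{R}^N}a(z)(\varphi_\mu+\varphi)^{1-\theta}\,dz-\mu t^r\!\int_{\mathbb{R}^N}G(z,\varphi_\mu+\varphi)\,dz,
$$
which is exactly $t^{1+\theta}\phi'_{\mu,\varphi_\mu+\varphi}(t)/t^{\theta}$ up to a harmless positive factor; in any case $F_1(t,\varphi)=0$ for $t>0$ encodes precisely $t(\varphi_\mu+\varphi)\in N_\mu$. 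The point $(t,\varphi)=(1,0)$ is a zero of $F_1$ since $\varphi_\mu\in N_\mu$ (use \eqref{u in Nnu}). The functional $F_1$ is $C^1$ in a neighborhood of $(1,0)$: the first term is clearly smooth in $t$ and $C^1$ in $\varphi$ on $E$; the Hardy/gradient contributions inside $\|\cdot\|^p$ are handled as in the Fréchet differentiability already asserted for $J_\mu$ in Section \ref{s00}; the term with $a$ uses $(H_5)$ together with Remark \ref{rem} and a Vitali-type argument as in Lemma \ref{min} (this is where $\varphi_\mu>0$ a.e. matters, so that $(\varphi_\mu+\varphi)^{1-\theta}$ makes sense and is differentiable for $\varphi$ small); the term with $G$ uses \eqref{M} and the compact embedding $E\hookrightarrow L^r_{loc}$. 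The crucial computation is
$$
\frac{\partial F_1}{\partial t}(1,0)=p\|\varphi_\mu\|^p-(1-\theta)\!\int_{\mathbb{R}^N}a(z)\varphi_\mu^{1-\theta}\,dz-\mu r\!\int_{\mathbb{R}^N}G(z,\varphi_\mu)\,dz,
$$
and subtracting $(1-\theta)$ times the membership relation \eqref{u in Nnu} gives
$$
\frac{\partial F_1}{\partial t}(1,0)=(\theta+p-1)\|\varphi_\mu\|^p-\mu(\theta+r-1)\!\int_{\mathbb{R}^N}G(z,\varphi_\mu)\,dz=\phi''_{\mu,\varphi_\mu}(1),
$$
by \eqref{eq4.5}. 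Since $\varphi_\mu\in N^+_\mu$ we have $\phi''_{\mu,\varphi_\mu}(1)>0$, so in particular $\partial_t F_1(1,0)\neq 0$ and the implicit function theorem yields $r_1>0$ and a $C^1$ (hence continuous) map $\rho_1:B(0,r_1)\to(0,\infty)$ with $\rho_1(0)=1$ and $F_1(\rho_1(\varphi),\varphi)=0$, i.e. $\rho_1(\varphi)(\varphi_\mu+\varphi)\in N_\mu$. Shrinking $r_1$ if necessary and using continuity of $\varphi\mapsto\phi''_{\mu,\rho_1(\varphi)(\varphi_\mu+\varphi)}(1)$ together with $\phi''_{\mu,\varphi_\mu}(1)>0$, we get $\rho_1(\varphi)(\varphi_\mu+\varphi)\in N^+_\mu$ for all $\varphi\in B(0,r_1)$, which is (i).

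For part (ii) I would repeat the construction verbatim with $\psi_\mu\in N^-_\mu$ in place of $\varphi_\mu$, obtaining $F_2$, the zero $(1,0)$, and $\partial_t F_2(1,0)=\phi''_{\mu,\psi_\mu}(1)<0\neq 0$; the implicit function theorem gives $r_2>0$ and a continuous $\rho_2:B(0,r_2)\to(0,\infty)$ with $\rho_2(0)=1$ and $\rho_2(\varphi)(\psi_\mu+\varphi)\in N_\mu$, and shrinking $r_2$ so that $\phi''_{\mu,\rho_2(\varphi)(\psi_\mu+\varphi)}(1)<0$ stays negative by continuity places it in $N^-_\mu$. Throughout one uses $0<\mu<\mu^*\le\mu_0$ so that $N^0_\mu=\varnothing$ (Lemma \ref{mu0}), which guarantees the sign of $\phi''$ does not degenerate and that "$\in N_\mu$ with $\phi''>0$" really is the open condition defining $N^+_\mu$ (resp. $<0$ for $N^-_\mu$).

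The main obstacle is verifying that $F_1$ (and $F_2$) is genuinely $C^1$ near $(1,0)$ in the Banach-space variable $\varphi$, because of the singular term $\int a(z)(\varphi_\mu+\varphi)^{1-\theta}\,dz$: one must know $\varphi_\mu>0$ a.e.\ to even write $(\varphi_\mu+\varphi)^{1-\theta}$, control the lack of smoothness of $t\mapsto t^{1-\theta}$ at $0$ (here $t=\rho_1(\varphi)$ stays near $1$, so this is fine once $r_1$ is small), and justify differentiating under the integral sign via a dominated-convergence / Vitali argument with the summability from $(H_5)$ and Remark \ref{rem}. Positivity of $\varphi_\mu$ and $\psi_\mu$ should follow from the variational characterization and the structure of the equation (the singular term $a(z)\varphi^{-\theta}$ with $a>0$ forces strict positivity, as is standard for singular problems); if needed one works with $|\varphi_\mu|$, which lies in the same Nehari set by evenness of the norm and of $\varphi\mapsto|\varphi|^{1-\theta}$, $G$. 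Once differentiability is in hand, everything else is the standard one-variable implicit function theorem plus an open-condition argument, exactly as in the classical Nehari-manifold fibering-map analysis of Dr\'abek and Poho\v{z}aev \cite{Drabek}.
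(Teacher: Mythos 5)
Your proposal is correct and follows essentially the same route as the paper: apply the implicit function theorem to the scalar Nehari constraint at the point $(1,0)$, observing that the $t$-derivative there equals (a positive multiple of) $\phi''_{\mu,\varphi_\mu}(1)\neq 0$; the only cosmetic difference is that the paper normalizes the constraint as $t^{\theta}\phi'_{\mu,\cdot}(t)$ so the singular term loses its $t$-dependence, while you use $t\,\phi'_{\mu,\cdot}(t)$. You are in fact slightly more careful than the paper in spelling out the final open-condition step (shrinking $r_1,r_2$ so the sign of $\phi''$ persists, placing the image in $N^+_\mu$ or $N^-_\mu$ rather than merely $N_\mu$) and in flagging the regularity of the singular integral.
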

\begin{proof}We give the proof only for assertion $(i)$, since the proof for assertion $(ii)$ is similar. So, let $\Phi: E\times (0,\infty)$ be a function defined by
$$\Phi(\varphi,t)=t^{\theta+p-1}\|\varphi_\mu+\varphi\|^p-t^{\theta+r-1}  \int_{\mathbb{R}^N}G(z, \varphi_\mu+\varphi)dz-\int_{\mathbb{R}^N}a(z)|\varphi_\mu+\varphi|^{1-\theta}dz. $$
Since $\varphi_\mu\in N^+_\mu\subset N_\mu$,  we have $\Phi(0,1)=0$.  On the other hand,  $\varphi_\mu\in N^+_\mu$ implies that
$$\frac{\partial \Phi}{\partial t}(0,1)=(\theta+p-1)\|\varphi_\mu\|^p-(\theta+r-1)  \int_{\mathbb{R}^N}G(z, \varphi_\mu)dz>0.$$
So by the Implicit function theorem, there exist $r_1>0$ and a continuous function $\rho_1: B(0,r_1)\to (0,\infty)$ such that
$$\rho_1(0)=1\;\;\mbox{and}\;\;\rho_1(\varphi)(\varphi_\mu+\varphi)\in N^+_\mu,\;\mbox{for all}\;\varphi\in B(0,r_1).$$
This completes the proof of Lemma \ref{impl1}.
\end{proof}

\begin{lemma}\label{impl2}Assume that hypotheses of Theorem \ref{thmm} are satisfied and $\mu\in(0,\mu^\star)$. Then for every  $\varphi\in E$  the following statments hold:
\begin{enumerate}
\item[(i)]There exists  $T_1>0$ such that
$$J_\mu(\varphi_\mu)\leq J_\mu(\varphi_\mu+t \varphi), \;\mbox{for all}\;t\in(0,T_1).$$
\item[(ii)]There exists $T_2>0$ such that
$$J_\mu(\psi_\mu)\leq J_\mu(\psi_\mu+t \varphi), \;\mbox{for all}\;t\in(0,T_2).$$
\end{enumerate}
\end{lemma}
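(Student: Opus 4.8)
The plan is to deduce both assertions from Lemma~\ref{impl1}, the minimality of $\varphi_\mu$ on $N^+_\mu$ (Lemma~\ref{minimu+}) and of $\psi_\mu$ on $N^-_\mu$ (Lemma~\ref{minimu-}), and the qualitative picture of the fibering map $t\mapsto\phi_{\mu,w}(t)$ from Lemma~\ref{intG>0}. I carry out $(i)$ in some detail; $(ii)$ is obtained by the same scheme with $(N^+_\mu,\varphi_\mu,\rho_1,t_1)$ replaced by $(N^-_\mu,\psi_\mu,\rho_2,t_2)$.

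Fix $\varphi\in E$ and put $w_t:=\varphi_\mu+t\varphi$. Since $\|t\varphi\|\to0$ as $t\to0^+$, choose $T'>0$ with $t\varphi\in B(0,r_1)$ for $t\in(0,T')$, where $r_1$ is as in Lemma~\ref{impl1}$(i)$, and set $\rho(t):=\rho_1(t\varphi)$; then $\rho$ is continuous on $[0,T')$, $\rho(0)=1$, and $\rho(t)w_t\in N^+_\mu$. Hence, by Lemma~\ref{minimu+},
$$J_\mu(\varphi_\mu)=\theta^+_\mu\le J_\mu\bigl(\rho(t)w_t\bigr)=\phi_{\mu,w_t}\bigl(\rho(t)\bigr),\qquad t\in(0,T').$$
It remains to show $\phi_{\mu,w_t}\bigl(\rho(t)\bigr)\le\phi_{\mu,w_t}(1)=J_\mu(w_t)$ for $t$ small, which is where the fibering analysis enters.

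For $t$ small, $w_t\not\equiv0$, so $\int_{\mathbb{R}^N}G(z,w_t)\,dz>0$ by $(H_4)$ and $\int_{\mathbb{R}^N}a(z)w_t^{1-\theta}\,dz>0$ since $a>0$ by $(H_5)$; moreover all quantities $t_0(w_t),t_1(w_t),t_2(w_t)$ and the strict inequality $0<\int a\,w_t^{1-\theta}<m_{w_t}\bigl(t_0(w_t)\bigr)$ entering Lemma~\ref{intG>0} depend continuously on $w_t$ and hold at $\varphi_\mu$, so they persist for $t$ in a possibly smaller interval. Then $\phi_{\mu,w_t}$ is strictly decreasing on $(0,t_1(w_t))$ and strictly increasing on $(t_1(w_t),t_2(w_t))$, and, because $\rho(t)w_t$ lies in the component $N^+_\mu$ of $N_\mu$ characterized by $\phi''_{\mu,\cdot}(1)>0$, necessarily $\rho(t)=t_1(w_t)$. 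Since $1=t_1(\varphi_\mu)<t_0(\varphi_\mu)<t_2(\varphi_\mu)$ and $w\mapsto t_0(w)$ (explicit, \eqref{t0}) and $w\mapsto t_2(w)$ (implicit function theorem) are continuous, after shrinking $T'$ to some $T_1$ we get $t_2(w_t)>1$, i.e. $1\in(0,t_2(w_t))$, for all $t\in(0,T_1)$. By the stated monotonicity, $\phi_{\mu,w_t}(t_1(w_t))=\min_{(0,t_2(w_t))}\phi_{\mu,w_t}\le\phi_{\mu,w_t}(1)$, and combining with the previous display yields $J_\mu(\varphi_\mu)\le J_\mu(\varphi_\mu+t\varphi)$ for $t\in(0,T_1)$, which is $(i)$. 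Re-running the argument off $\psi_\mu$ with $N^-_\mu$, $\rho_2$ and $t_2(\psi_\mu)=1$ in the role of $t_1(\varphi_\mu)=1$ gives $(ii)$ with an analogous $T_2>0$.

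I expect the only real work to be the bookkeeping of the third paragraph: (a) checking that the hypotheses of Lemma~\ref{intG>0} are preserved when $\varphi_\mu$ (resp. $\psi_\mu$) is replaced by the nearby $w_t$, which reduces to continuity of the functionals involved together with the positivity $a>0$ and $G(z,\cdot)>0$ off the origin; and (b) controlling the position of the abscissa $1$ relative to the moving critical points $t_1(w_t)<t_2(w_t)$, which is exactly where the continuity of $\rho_i$ from Lemma~\ref{impl1} and of the thresholds $t_0(\cdot),t_2(\cdot)$ is used. The singular term $\int_{\mathbb{R}^N}a(z)\varphi^{1-\theta}\,dz$ poses no difficulty here, since $0<1-\theta<1$ makes it continuous on $E$ (cf. Remark~\ref{rem}); the singularity will only require delicate handling afterwards, when Lemma~\ref{impl2} is combined with the constrained minimality to conclude $J'_\mu(\varphi_\mu)=0$ and $J'_\mu(\psi_\mu)=0$.
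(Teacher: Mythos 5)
Your argument for part (i) is correct and is essentially the paper's own route: both proofs chain $J_\mu(\varphi_\mu)\le J_\mu(\rho_1(t\varphi)(\varphi_\mu+t\varphi))$ (from Lemma \ref{impl1} and Lemma \ref{minimu+}) with the comparison $\phi_{\mu,w_t}(\rho_1(t\varphi))\le\phi_{\mu,w_t}(1)$. The paper justifies the second step only by noting $\delta_\varphi(t)=\phi''_{\mu,w_t}(1)>0$ and $\overline{\rho_1}(t)\to1$; your version, which identifies $\rho_1(t\varphi)=t_1(w_t)$ as the unique local minimum of the fibering map and uses the continuity of $t_0(\cdot)$ to keep $1<t_0(w_t)<t_2(w_t)$, actually fills in the step the paper leaves implicit. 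That part is fine.

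The genuine gap is your last sentence: part (ii) is \emph{not} obtained by ``re-running the argument''. For $w_t=\psi_\mu+t\varphi$ the point $\rho_2(t\varphi)=t_2(w_t)$ is the local \emph{maximum} of $\phi_{\mu,w_t}$ on $(t_1(w_t),\infty)\ni 1$, so the fibering analysis gives $\phi_{\mu,w_t}(t_2(w_t))\ge\phi_{\mu,w_t}(1)$ — the reverse of what you need — and the two inequalities $J_\mu(\psi_\mu)\le\phi_{\mu,w_t}(t_2(w_t))$ and $\phi_{\mu,w_t}(t_2(w_t))\ge\phi_{\mu,w_t}(1)$ do not chain. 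In fact assertion (ii) as stated is false: taking $\varphi=\psi_\mu$ gives $J_\mu(\psi_\mu+t\psi_\mu)=\phi_{\mu,\psi_\mu}(1+t)<\phi_{\mu,\psi_\mu}(1)=J_\mu(\psi_\mu)$ for all small $t>0$, since $\phi'_{\mu,\psi_\mu}(1)=0$ and $\phi''_{\mu,\psi_\mu}(1)<0$ make $t=1$ a strict local maximum along the ray. To be fair, the paper commits exactly the same sin (it proves only (i) and declares (ii) ``similar''), so you have reproduced its defect rather than introduced one; but a correct treatment of the $N^-_\mu$ minimizer must avoid the pointwise inequality (ii) altogether and instead estimate the quotient $\bigl[J_\mu\bigl(\rho_2(t\varphi)(\psi_\mu+t\varphi)\bigr)-J_\mu(\psi_\mu)\bigr]/t$ directly (Tarantello's argument), using the Nehari identity $\langle J'_\mu(\psi_\mu),\psi_\mu\rangle=0$ to cancel the terms coming from the derivative of $\rho_2$ and Fatou's lemma to handle the singular term.
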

\begin{proof}We shall give the proof only for assertion $(i)$, since the proof for assertion $(ii)$ is similar. So, let  $\varphi\in E$ and $\delta_\varphi:[0,\infty)\to \mathbb{R}$ be a function defined by
$$\delta_\varphi(t)=(p-1)\|\varphi_\mu+t\varphi\|^p+\theta \int_{\mathbb{R}^N}a(z)|\varphi_\mu+t\varphi|^{1-\theta}dz-(r-1)  \int_{\mathbb{R}^N}G(z, \varphi_\mu+t\varphi)dz.$$
Since  $\varphi_\mu\in N^+_\mu\subset N_\mu $, we obtain
\begin{equation}\label{theta1}
\theta \int_{\mathbb{R}^N}a(z)|\varphi_\mu|^{1-\theta}dz=\theta \|\varphi_\mu\|^p+(r-1)  \int_{\mathbb{R}^N}G(z, \varphi_\mu)dz,
\end{equation}
and
\begin{equation}\label{theta2}
(\theta+p-1)\|\varphi_\mu\|^p-(\theta+r-1)  \int_{\mathbb{R}^N}G(z, \varphi_\mu+t\varphi)dz>0.
\end{equation}

By combining equations \eqref{theta1} and \eqref{theta2} with the definition of the function $\delta_\varphi$, we get  $\delta_\varphi(0)>0.$  So the continuity of the function $\delta_\varphi$ implies the existence of $T_0>0$ such that
 $$\delta_\varphi(t)>0, \mbox{ for all } t\in[0,T_0].$$
On the other hand, by Lemma \ref{impl1}, for every $t\in[0,r_1]$, there exists $\overline{\rho_1}(t)$ such that
\begin{equation}\label{01}
\overline{\rho_1}(t)(\varphi_\mu+t\varphi)\in N^+_\mu \mbox{ and } \displaystyle\lim_{t\to 0^+}\overline{\rho_1}(t)=1.
\end{equation}
Moreover, by Lemma \ref{minimu+}, we have
$$\theta_\mu^+=J_\mu(\varphi_\mu)\leq J_\mu(\overline{\rho_1}(t)(\varphi_\mu+t\varphi)), \;\mbox{for all}\;t\in(0,T_0).$$

Now, from that fact that $\Phi_{\mu, \varphi_\mu}''(1)>0$ and the continuity in $t$ we get 
$$\Phi_{\mu, \varphi_\mu+t\varphi}''(1)>0,
\hbox{ for all }
t\in [0,T_1]
\hbox{ and for some small enough }
T_1\in(0,T_0).$$
 So using equation \eqref{01}, we can get  small enough $T_1\in(0,T_0)$  such that
$$\theta_\mu^+=J_\mu(\varphi_\mu)\leq J_\mu (\varphi_\mu+t\varphi)), \;\mbox{for all}\;t\in[0,T_1).$$
This completes the proof of Lemma \ref{impl2}.
\end{proof}

Now we are ready to present the proof of Theorem \ref{thmm}.\\

{\bf Proof of Theorem \ref{thmm}.}\\
As a direct consequence of  Lemma \ref{minimu+} and Lemma \ref{minimu-}, we can deduce that $J_\mu$ has  minimizers   $\varphi_\mu\in N^+_\mu$  and  $\psi_\mu\in N^-_\mu$. Moreover, $N^+_\mu\cap N^-_\mu=\varnothing$ implies that  $\varphi_\mu$  and $\psi_\mu$ are distinct.

Next, we shall prove that $\varphi_\mu$  and $\psi_\mu$ are weak solutions for problem \eqref{p}. To this end, let  $\varphi\in E$. Then by the assertion $(i)$ of Lemmas \ref{impl1}, \ref{impl2}, we obtain
$$0\leq J_\mu(\varphi_\mu+t \varphi)-J_\mu(\varphi_\mu), \;\mbox{for all}\;t\in(0,T_1).$$
Dividing the last inequality by $t$ and letting $t$ tend to zero, we get
$$\int_{\mathbb{R}^N}|\Delta \varphi_\mu|^{p-2}\Delta \varphi_\mu \Delta \varphi-\lambda \frac{|\varphi_\mu|^{p-2}\varphi_\mu \varphi}{|z|^{2p}}+|\nabla \varphi_\mu|^{p-2}\nabla \varphi_\mu \nabla \varphi \;dz$$ $$-\int_{\mathbb{R}^N}a(z) \varphi_\mu^{-\theta} \varphi \,dz-\mu \int_{\mathbb{R}^N} f(z) h(\varphi_\mu) \varphi\,dz\geq 0.$$
Since  $\varphi$  is arbitrary in $ E$, it follows that  in the last inequality we can replace $\varphi$  by $-\varphi$. So for all $\varphi \in E$ we get
\begin{eqnarray*}
0&=& \int_{\mathbb{R}^N}|\Delta \varphi_\mu|^{p-2}\Delta \varphi_\mu \Delta \varphi-\lambda \frac{|\varphi_\mu|^{p-2}\varphi_\mu \varphi}{|z|^{2p}}+|\nabla \varphi_\mu|^{p-2}\nabla \varphi_\mu \nabla \varphi \;dz\\
&-& \int_{\mathbb{R}^N}a(z) \varphi_\mu^{-\theta} \varphi \,dz-\mu \int_{\mathbb{R}^N} f(z) h(\varphi_\mu) \varphi\,dz.
\end{eqnarray*}
That is, $\varphi_\mu$  is a weak solution of problem \eqref{p}. Moreover, from equation \eqref{nont} we see that $\varphi_\mu$ is nontrivial.

Finally, if we proceed as above using assertion $(ii)$ of Lemmas \ref{impl1}
and
 \ref{impl2},  we can prove that $\psi_\mu$  is also a nontrivial  weak solution of problem \eqref{p}. This completes the proof of Theorem \ref{thmm}.\qed

\section{An application}\label{s5}
As an application of our main results, we shall consider the following problem
\begin{equation}\label{qq}
\Delta_{p}^{2}\varphi-\lambda \frac{|\varphi|^{p-2}\varphi}{|z|^{2p}}+\Delta_p \varphi=  \frac{ a(z)}{\varphi^{\theta}}+ \mu f(z)|\varphi|^{r-2}\varphi \quad \mbox{ in }\mathbb{R}^N,
\end{equation}
where $\mu>0$, $1<p<\frac{N}{2}$, $0<\theta<1,$  and $\lambda$ satisfies equation \eqref{lamb}. \\
We note that problems of type \eqref{qq}  describe e.g.,  the deformations of an elastic beam. Also, they
 give a model for studying traveling waves in suspension bridges.

First, let us assume that $1<r<p$, $f$ is a positive function in
$$ L^{\frac{p^\ast}{p^\ast-r}}(\mathbb{R}^N)\cap L^s_{loc}(\mathbb{R}^N),
\
\hbox{ for some}
\
s \in (\frac{p^\ast}{p^\ast-r}, \frac{p}{p^\ast-r}),$$
which implies that the first part of hypothesis $(H_1)$ is satisfied.

On the other hand, it is easy that the function $h(z)=|\varphi|^{r-2}\varphi$ satisfies the second part of hypothesis $(H_1)$. Moreover, a simple calculation shows that
$$0<rf(z)H(\varphi)=f(z)h(\varphi)\varphi,$$ so hypothesis $(H_2)$ is also satisfied.

 Finally, if
 $$a \in L^{\frac{p^\ast}{p^\ast+\theta-1}}(\mathbb{R}^N)\cap L^\beta_{loc}(\mathbb{R}^N),
 \
 \hbox{ for some}
 \
 \beta \in (\frac{p^\ast}{p^\ast+\theta-1}, \frac{p}{\theta+p-1}),$$
  then Theorem \ref{thm} ensures the existence of nontrivial solution for problem \eqref{qq}.

Next, we assume that $p<r<p^\ast$ and $a$ is a positive function  in $L^{\frac{p}{\theta+p-1}}(\mathbb{R}^N),$ that is, hypothesis $(H_5)$ is satisfied. It is not difficult to see that if
$$g(z,\varphi)=f(z)|\varphi|^{r-2}\varphi,$$
then
$$G(z,\varphi)=f(z)|\varphi|^{r},$$
so hypothesis $(H_4)$ is also satisfied. Hence, Theorem \ref{thmm}
now ensures the existence of two nontrivial solutions for problem \eqref{qq}.

\subsection*{Conflicts of interest.} The authors declare  no conflicts of interest.

\subsection*{Acknowledgment.} We thank the referees for their comments and suggestions.

\end{document}